\title[Fluid-structure interaction problem]{On the local existence of solutions to the fluid-structure interaction problem\\with a free interface}
\author[I.~Kukavica]{\colu Igor Kukavica}
\address{Department of Mathematics, University of Southern California, Los Angeles, CA 90089}
\email{kukavica@usc.edu}
\author[L.~Li]{\colu Linfeng Li}
\address{Department of Mathematics, University of California Los Angeles, Los Angeles, CA 90095}
\email{lli265@math.ucla.edu}
\author[A.~Tuffaha]{\colu Amjad Tuffaha}
\address{Department of Mathematics and Statistics, American University of Sharjah, Sharjah, UAE}
\email{atufaha\char'100aus.edu}
\chardef\forshowkeys=0
\chardef\showllabel=0
\chardef\refcheck=0
\chardef\sketches=0
\chardef\showcolors=0
\begin{document}

\def\CCC{\bar C}
\def\and{\quad\text{and}\quad}
\def\tLambda{\tilde \Lambda}
\def\tPi{\tilde \Pi}
\def\hff{h_{1}}
\def\hf{h_{2}}
\def\XX{X}
\def\YY{Y}
\def\ZZZ{Z}

\def\intint{\int\!\!\!\!\int}
\def\OO{\mathcal O}
\def\SS{\mathbb S}
\def\CC{\mathbb C}
\def\RR{\mathbb R}
\def\TT{\tilde{T}}
\def\ZZ{\mathbb Z}
\def\RSZ{\mathcal R}
\def\LL{\mathcal L}
\def\SL{\LL^1}
\def\ZL{\LL^\infty}
\def\GG{\mathcal G}

\def\tt{\langle t\rangle}
\def\erf{\mathrm{Erf}}
\def\red#1{\textcolor{red}{#1}}
\def\blue#1{\textcolor{blue}{#1}}
\def\mgt#1{\textcolor{magenta}{#1}}
\def\ff{\rho}
\def\gg{G}
\def\sqrtnu{\sqrt{\nu}}
\def\ww{w}
\def\ft#1{#1_\xi}
\def\lec{\lesssim}
\def\ges{\gtrsim}
\renewcommand*{\Re}{\ensuremath{\mathrm{{\mathbb R}e\,}}}
\renewcommand*{\Im}{\ensuremath{\mathrm{{\mathbb I}m\,}}}

\ifnum\showllabel=1
\def\llabel#1{\marginnote{\color{lightgray}\rm\small(#1)}[-0.0cm]\notag}
\else
\def\llabel#1{\notag}
\fi

\newcommand{\norm}[1]{\left\|#1\right\|}
\newcommand{\nnorm}[1]{\lVert #1\rVert}
\newcommand{\abs}[1]{\left|#1\right|}
\newcommand{\NORM}[1]{|\!|\!| #1|\!|\!|}

\newtheorem{Theorem}{Theorem}[section]
\newtheorem{Corollary}[Theorem]{Corollary}
\newtheorem{Definition}[Theorem]{Definition}
\newtheorem{Proposition}[Theorem]{Proposition}
\newtheorem{Lemma}[Theorem]{Lemma}
\newtheorem{Remark}[Theorem]{Remark}

\def\theequation{\thesection.\arabic{equation}}
\numberwithin{equation}{section}

\definecolor{mygray}{rgb}{.6, .6, .6}
\definecolor{myblue}{rgb}{9, 0, 1}
\definecolor{colorforkeys}{rgb}{1.0,0.0,0.0}

\newlength\mytemplen
\newsavebox\mytempbox

\makeatletter
\newcommand\mybluebox{%
\@ifnextchar[
{\@mybluebox}%
{\@mybluebox[0pt]}}

\def\@mybluebox[#1]{%
\@ifnextchar[
{\@@mybluebox[#1]}%
{\@@mybluebox[#1][0pt]}}

\def\@@mybluebox[#1][#2]#3{
\sbox\mytempbox{#3}%
\mytemplen\ht\mytempbox
\advance\mytemplen #1\relax
\ht\mytempbox\mytemplen
\mytemplen\dp\mytempbox
\advance\mytemplen #2\relax
\dp\mytempbox\mytemplen
\colorbox{myblue}{\hspace{1em}\usebox{\mytempbox}\hspace{1em}}}

\makeatother

\ifnum\showcolors=1
  \def\colr{\color{red}}
  \def\colrr{\color{black}}
  \def\colb{\color{black}}
  \def\coly{\color{lightgray}}
  \definecolor{colorgggg}{rgb}{0.1,0.5,0.3}
  \definecolor{colorllll}{rgb}{0.0,0.7,0.0}
  \definecolor{colorhhhh}{rgb}{0.3,0.75,0.4}
  \definecolor{colorpppp}{rgb}{0.7,0.0,0.2}
  \definecolor{coloroooo}{rgb}{0.45,0.0,0.0}
  \definecolor{colorqqqq}{rgb}{0.1,0.7,0}
  \def\colg{\color{colorgggg}}
  \def\collg{\color{colorllll}}
  \def\cole{\color{coloroooo}}
  \def\coleo{\color{colorpppp}}
  \def\colu{\color{blue}}
  \def\colc{\color{colorhhhh}}
  \def\colW{\colb}   
  \definecolor{coloraaaa}{rgb}{0.6,0.6,0.6}
  \def\colw{\color{coloraaaa}}
\else
  \def\colr{\color{black}}
  \def\colrr{\color{black}}
  \def\colb{\color{black}}
  \def\coly{\color{black}}
  \def\colg{\color{black}}
  \def\collg{\color{black}}
  \def\cole{\color{black}}
  \def\coleo{\color{black}}
  \def\colu{\color{blue}}
  \def\colc{\color{black}}
  \def\colW{\color{black}}
  \def\colw{\color{black}}
\fi

\def\Gac{\Gamma_{\text{c}}}
\def\bnew{\colr}
\def\enew{\colb}
\def\bold{\colu}
\def\eold{\colb}
\def\uu{\tilde u}
\def\aa{\theta}
\def\KK{K}
\def\ll{\lambda}
\def\dwdn{\frac{\partial w}{\partial N}}
\def\dwbardn{\frac{\partial \bar w}{\partial N}}
\def\dwbardntext{\fractext{\partial \bar w}{\partial N}}
\def\dwdntext{\fractext{\partial w}{\partial N}}
\def\un{u^{(n)}}
\def\AA{\tilde g}
\def\BB{b}

\def\HH{H_{\text f}}
\def\Omegaf{\Omega_{\text f}}
\def\Omegae{\Omega_{\text e}}
\def\Gammac{\Gamma_{\text c}}
\def\Gammaf{\Gamma_{\text f}}
\def\thf{{\tilde h}_{\text f}}
\def\cof{\mathop{\rm cof}\nolimits}
\def\OPS{\mathop{\rm OPS}\nolimits}
\def\Dn{\frac{\partial}{\partial N}}
\def\Dnn#1{\frac{\partial #1}{\partial N}}
\def\lat{\Delta_2}
\def\biglinem{\vskip0.5truecm\par==========================\par\vskip0.5truecm}

\def\inon#1{\hbox{\ \ \ \ \ \ \ }\hbox{#1}}                
\def\onon#1{\inon{on~$#1$}}
\def\inin#1{\inon{in~$#1$}}

\def\FF{F}
\def\ww{w(y)}
\def\ee{\epsilon_0}
\def\startnewsection#1#2{\medskip\section{#1}\label{#2}\setcounter{equation}{0}\medskip}   
\def\nnewpage{ }
\def\sgn{\mathop{\rm sgn\,}\nolimits}    
\def\Tr{\mathop{\rm Tr}\nolimits}    
\def\div{\mathop{\rm div}\nolimits}
\def\curl{\mathop{\rm curl}\nolimits}
\def\dist{\mathop{\rm dist}\nolimits}  
\def\supp{\mathop{\rm supp}\nolimits}
\def\indeq{\quad{}}           
\def\period{.}                       
\def\scl{,}
\def\nts#1{{\colr #1\colb}}
\def\ntsik#1{{\colr IK:~#1\colb}}
\def\ques#1{{\colr #1\colb}}
\def\pt{\partial_t}

\def\comma{ {\rm ,\qquad{}} }            
\def\commaone{ {\rm ,\quad{}} }          
\def\les{\lesssim}
\def\nts#1{{\color{red}\hbox{\bf ~#1~}}} 
\def\ntsf#1{\footnote{\color{colorgggg}\hbox{#1}}} 
\def\blackdot{{\color{red}{\hskip-.0truecm\rule[-1mm]{4mm}{4mm}\hskip.2truecm}}\hskip-.3truecm}
\def\bluedot{{\color{blue}{\hskip-.0truecm\rule[-1mm]{4mm}{4mm}\hskip.2truecm}}\hskip-.3truecm}
\def\purpledot{{\color{colorpppp}{\hskip-.0truecm\rule[-1mm]{4mm}{4mm}\hskip.2truecm}}\hskip-.3truecm}
\def\greendot{{\color{colorgggg}{\hskip-.0truecm\rule[-1mm]{4mm}{4mm}\hskip.2truecm}}\hskip-.3truecm}
\def\cyandot{{\color{cyan}{\hskip-.0truecm\rule[-1mm]{4mm}{4mm}\hskip.2truecm}}\hskip-.3truecm}
\def\reddot{{\color{red}{\hskip-.0truecm\rule[-1mm]{4mm}{4mm}\hskip.2truecm}}\hskip-.3truecm}

\def\gdot{\greendot}
\def\bdot{\bluedot}
\def\tdot{\greendot}
\def\ydot{\cyandot}
\def\rdot{\cyandot}

\def\fractext#1#2{{#1}/{#2}}
\def\II{\mathcal{I}}
\def\ii{\hat\imath}
\def\fei#1{\textcolor{blue}{#1}}
\def\vlad#1{\textcolor{cyan}{#1}}
\def\igor#1{\text{{\textcolor{colorqqqq}{#1}}}}
\def\igorf#1{\footnote{\text{{\textcolor{colorqqqq}{#1}}}}}

\newcommand{\UE}{U^{\rm E}}
\newcommand{\PE}{P^{\rm E}}
\newcommand{\KP}{K_{\rm P}}
\newcommand{\uNS}{u^{\rm NS}}
\newcommand{\vNS}{v^{\rm NS}}
\newcommand{\pNS}{p^{\rm NS}}
\newcommand{\omegaNS}{\omega^{\rm NS}}
\newcommand{\uE}{u^{\rm E}}
\newcommand{\vE}{v^{\rm E}}
\newcommand{\pE}{p^{\rm E}}
\newcommand{\omegaE}{\omega^{\rm E}}
\newcommand{\ua}{u_{\rm   a}}
\newcommand{\va}{v_{\rm   a}}
\newcommand{\omegaa}{\omega_{\rm   a}}
\newcommand{\ue}{u_{\rm   e}}
\newcommand{\ve}{v_{\rm   e}}
\newcommand{\omegae}{\omega_{\rm e}}
\newcommand{\omegaeic}{\omega_{{\rm e}0}}
\newcommand{\ueic}{u_{{\rm   e}0}}
\newcommand{\veic}{v_{{\rm   e}0}}
\newcommand{\up}{u^{\rm P}}
\newcommand{\vp}{v^{\rm P}}
\newcommand{\tup}{{\tilde u}^{\rm P}}
\newcommand{\bvp}{{\bar v}^{\rm P}}
\newcommand{\omegap}{\omega^{\rm P}}
\newcommand{\tomegap}{\tilde \omega^{\rm P}}
\renewcommand{\up}{u^{\rm P}}
\renewcommand{\vp}{v^{\rm P}}
\renewcommand{\omegap}{\Omega^{\rm P}}
\renewcommand{\tomegap}{\omega^{\rm P}}

\begin{abstract}
We address a system of equations modeling an incompressible fluid interacting with an elastic body. 
We prove 
the local existence when the initial velocity belongs to the space
$H^{1.5+\epsilon}$ and the initial structure velocity is in $H^{1+\epsilon}$, where $\epsilon \in (0, 1/20)$.
\hfill \today
\end{abstract}

\keywords{fluid-structure interaction, local existence, Navier-Stokes equations, wave equation, trace regularity}
\maketitle

\setcounter{tocdepth}{1} 
\tableofcontents

\startnewsection{Introduction}{sec01}

The purpose of this paper is to establish the local-in-time existence of solutions for the free boundary fluid-structure interaction model with minimal regularity assumptions on the initial data.  The model, which first appeared in a modeling book \cite{MZ}, describes the interaction between an elastic structure and a viscous incompressible fluid in which the structure is immersed.  Mathematically, the dynamics of the fluid are captured by the incompressible Navier-Stokes equations in the velocity and pressure variables $(u,p)$, while the elastic dynamics are described by a second order elasticity equation (we replace it with a wave equation to simplify presentation; see Remark~\ref{R01}) in the vector variables $(w,w_{t})$ representing the displacement and velocity of the structure. The interaction between the structure and the fluid is mathematically described by velocity and stress matching boundary conditions at the moving interface separating the solid and fluid regions.  Since the interface position evolves with time and is not known a~priori, this is a free-boundary problem.

Well-posedness and local-in-time existence were first obtained by Coutand and Shkoller \cite{CS1, CS2} in 2005. The authors use the Lagrangian coordinate system to fix the domain and a Tychonoff fixed point theorem to construct local-in-time solutions given initial fluid velocity $u_{0} \in H^{5}$ and structural velocity $w_{1} \in H^{3}$.  A key feature of the model is the mismatch between parabolic and hyperbolic regularity and the importance of trace regularity theorems for both parabolic and hyperbolic equations in establishing the regularity of solutions. In \cite{KT1, KT2}, the authors established a~priori estimates for the local existence of solutions using direct estimates given $u_{0} \in H^{3}$ for the initial velocity and $w_{1} \in H^{5/2+r}$, where $r \in (0, (\sqrt{2}-1)/2)$, for the initial structural velocity. A key ingredient in obtaining the result was the hidden regularity trace theorems for the wave equations established in \cite{LLT, BL, L1, L2,S,T}. A wealth of literature on wave-heat coupled systems on a non-moving domain was instrumental in further understanding the heat-wave interaction phenomena (cf.~\cite{ALT, AT1, AT2, DGHL, BGLT1, BGLT2, KTZ1, KTZ2, KTZ3, LL1}). Similar results were obtained for the compressible free-boundary version~\cite{BG1, BG2, KT3,KLT1}.  For some other works on fluid-structure models, cf.~\cite{B,BuL,BZ1,BZ2,BTZ,DEGL,F,GH,GGCC,GGCCL,IKLT1,LL1,LL2,LT,LTr1,LTr2,MC1,MC2,MC3,SST}.

More recently, a sharp regularity result for the case when the initial domain is a flat channel was obtained by Raymond and Vanninathan~\cite{RV}. The authors study the system in the Lagrangian coordinate setting and obtain local-in-time solutions for the 3D model when $u_{0} \in H^{1.5+\epsilon}$ and $w_{1} \in H^{1+\epsilon+\delta}$, where $\epsilon\in (0,1/2)$ and $\delta>0$.
More recently, Boulakia, Guerrero, and Takahashi obtained in \cite{BGT} a local-in-time solution given data $u_{0} \in H^{2}$ and $w_{1} \in H^{9/8}$ for the case of a general domain.

In this paper, we provide a natural proof of the existence of unique local-in-time solutions to the system under the sharp assumption of $u_{0} \in H^{1.5+\epsilon}$ and $w_{1} \in H^{1+\epsilon}$, where $\epsilon\in (0,1/20)$, in the case of a flat domain.  
Note that this regularity seems to be the lowest regularity which guarantees the cofactor matrix to be bounded.  Our proof relies on a maximal regularity type theorem for the linear Stokes system with Neumann type conditions and nonhomogeneous divergence in addition to the hidden regularity theorems for the wave equation. 
Another essential ingredient in the proof of the main theorem is a trace inequality 
for functions which are Sobolev in the time variable and square integrable on the boundary (cf.~Lemma~\ref{L01} below).  
This is used essentially in the proof of the main results, Theorem~\ref{T01}.

When establishing the existence of solutions, we encounter a fundamental difficulty that the constants in inequalities we use inversely proportional to powers of time $T$ for $T$ small. This then poses  problems in the convergence of a fixed-point scheme for small time. 
The main difficulty is then that one cannot benefit from the small time in a low Sobolev regularity Poincar\'e-type inequalities (see Proposition~\ref{P01} below).
In this paper we overcome this difficulty by modifying the system using a cutoff time function of the integrated velocity matching conditions (see~\eqref{EQ320} below) and appealing to the hidden trace regularity Lemma~\ref{L03} for the normal derivative.
A major advantage of the new approach using the cut-offs is that the interpolation inequality, trace, hidden regularity, and maximal regularity theorems are all applied on a fixed space-time domain and thus the implicit constants appeared are independent of the small time.
We benefit from the small-time Poincar\'e type inequalities by making sure that they are always used
when the cut-off is present and when the Sobolev spaces have an integer index.
\colb

The primary challenge of establishing the fixed-point theorems (for both the linear and nonlinear variants) is that the time derivatives, which are often fractional, fall on the cutoff function, showing that the careful analysis of the cutoff function and the powers of~$\TT$ is required.
Moreover, in the nonlinear system treated in Section~\ref{sec06}, we also modify the Lagrangian flow map and its cofactor matrix using the cutoff function to guarantee the contraction-type estimates on the solution map for the system with given variable coefficients.

The construction of solutions for the fluid-structure problem is obtained via the Banach fixed point theorem. The scheme depends on solving the Stokes system with the variable coefficients treated as a given forcing perturbation which appears in the equations with the Neumann boundary conditions and the nonhomogeneous divergence condition. 
The iteration scheme then requires solving the wave equation using Dirichlet data which in turn is used to solve the linear Stokes system. 
In addition, the solution in each iteration step is used to prescribe new variable coefficients for the next step of the iteration.  The ellipticity of the variable coefficients is maintained by taking a sufficiently short time to ensure closeness of the inverse matrix of the flow map to its initial state of the identity and provide the necessary contraction-type estimates.  
We note that the global-in-time results for small data was established
in works~\cite{IKLT1,IKLT2,KO1,KO2,KT4}.

\colb
\startnewsection{The setting and the main result}{sec02}
We revisit the fluid-structure system coupling 
the incompressible Navier-Stokes equation and an
elasticity equation. The two equations are defined on domains
$\Omegaf(t)$ and $\Omegae(t)$ respectively, representing the
fluid and the solid regions, comprising a fixed bounded region
$\Omega= \Omegaf(t) \cup \Omegae(t)$ in $\mathbb{R}^{3}$.
The interaction is described by transmission boundary conditions on
the interface $\Gamma(t)$ between the two domains. The dynamics are
best described by the Lagrangian coordinates on the initial domain
configurations $\Omegaf(0)=\Omegaf$ and
$\Omegae(0)=\Omegae$. The incompressible Navier-Stokes
equations (\cite{Te1,Te2}) satisfied by the fluid velocity $v=(v_{1},v_{2},v_{3})$ and
the pressure $q$ are expressed in the  Lagrangian coordinates as
  \begin{align}
  &\partial_{t}v_{k} 
  -  a_{jl} \partial_{j}( a_{ml}
  \partial_{m} v_{k}) + a_{jk}\partial_{j}  q = 0
  \inon{in~$(0,T)\times\Omegaf $}
  \label{EQ01}
  \\&
  a_{ji}\partial_{j}v_{i}=0
  \inon{in~$(0,T)\times\Omegaf    $}
  ,
  \label{EQ02}
  \end{align}
for $k=1,2,3$, where we used the summation convention
on repeated indices. 
The symbol $a_{kj}$ denotes the $kj$ entry
of the $3 \times 3 $ matrix $a$ which is related to the flow map
$\eta$ by
\begin{align}
a= (\nabla \eta)^{-1}
.
\label{EQ03}
\end{align}
The flow map $\eta(t,\cdot)\colon \Omega \to \Omega$ is an unknown function
which determines the evolution of the domains. In particular,
we have
\begin{align}
\eta(t,\cdot)\colon\Omegaf \to \Omegaf(t)
\label{EQ132}
\end{align}
and
\begin{align}
\eta(t,\cdot)\colon \Omegae \to \Omegae(t)
,
\label{EQ133}
\end{align}
with the initial condition
\begin{align}
\eta(0,x)= x
\comma x\in\Omega_e
.
\label{EQ134}
\end{align}
On the other hand, the dynamics of the solid body and in particular
the displacement variable $w=(w_{1}, w_{2},w_{3})$, or $w(t,x)=
\eta(t,x)-x$, and its velocity $w_{t}$ are described by the
wave equation 
\begin{equation}
w_{tt}
- \Delta w
= 0
\inon{in~$(0,T)\times\Omegae $},
\label{EQ04}
\end{equation}
set in the natural Lagrangian variables.
Adjustments to more general stresses are possible (cf.~Remark~\ref{R01} below).

In this paper, we consider the case when the reference configuration $\Omega = \Omegaf \cup \Omegae \cup \Gammac$, $\Omegaf$, and $\Omegae$ are given by (see figure 1)
\begin{align}
\begin{split}
&
\Omega 
=
\{y=(y_1,y_2,y_3) \in \mathbb{R}^3 
:
(y_1, y_2) \in \mathbb{T}^2, 0<y_3< L_3\}
,
\\
&
\Omegaf
=
\{y=(y_1,y_2,y_3) \in \mathbb{R}^3 :
(y_1, y_2) \in \mathbb{T}^2, 0<y_3< L_1  ~\text{~or~}~ L_2<y_3< L_3 \},
\\&
\Omegae
=
\{y=(y_1,y_2,y_3) \in \mathbb{R}^3 :
(y_1, y_2) \in \mathbb{T}^2, L_1<y_3< L_2\}
,
\end{split}
\llabel{EQ313}
\end{align}
where $0<L_1 <L_2 <L_3$ and $\mathbb{T}^2$ is the two-dimensional
torus with the side~$2\pi$.
Thus, the common boundary to the fluid and the elastic body is expressed as
\begin{align}
\Gammac
=
\{(y_1, y_2) \in \mathbb{R}^2 :
(y_1, y_2, y_3) \in \Omega, ~y_3= L_1 ~\text{~or~}~ y_3= L_2\}
,
\llabel{EQ360}
\end{align}
while the outer boundary is represented by
\begin{align}
\Gammaf 
= 
\{y \in \bar{\Omega} : y_3 = 0 ~\text{~or~}~ y_3 = L_3\}
.
\llabel{EQ318}
\end{align}
The
interaction boundary conditions prescribed on the interface
$\Gammac(t)$ common to the two domains are the velocity and the stress
matching conditions, 
which are formulated in Lagrangian coordinates
over $\Gammac = \Gammac(0)$ as
\begin{align}
&v
= w_t
\inon{on~$(0,T)\times\Gammac$}
\label{EQ07}
\end{align}
and
\begin{align}
\frac{\partial w_k}{\partial N} = a_{jl} a_{ml} \partial_{m} v_{k} N^{j} - a_{jk} q N^{j}
\inon{on~$(0,T)\times\Gammac$},
\label{EQ135}
\end{align}
for $k=1,2,3$, where $N=(N^{1},N^{2},N^{3})$ denotes the 
unit normal at the interface $\Gammac$, which
is oriented outward
with respect to the domain $\Omegae$.

\tikzset{every picture/.style={line width=0.75pt}} 

\tikzset{every picture/.style={line width=0.75pt}} 
\begin{center}

\begin{tikzpicture}[x=0.75pt,y=0.75pt,yscale=-1,xscale=1]

\draw  [fill={rgb, 255:red, 255; green, 255; blue, 255 }  ,fill opacity=1 ] (106,172.8) -- (117.6,161.2) -- (226,161.2) -- (226,207.6) -- (214.4,219.2) -- (106,219.2) -- cycle ; \draw   (226,161.2) -- (214.4,172.8) -- (106,172.8) ; \draw   (214.4,172.8) -- (214.4,219.2) ;
\draw  [fill={rgb, 255:red, 255; green, 255; blue, 255 }  ,fill opacity=1 ] (106,126.4) -- (117.6,114.8) -- (226,114.8) -- (226,161.2) -- (214.4,172.8) -- (106,172.8) -- cycle ; \draw   (226,114.8) -- (214.4,126.4) -- (106,126.4) ; \draw   (214.4,126.4) -- (214.4,172.8) ;
\draw  [fill={rgb, 255:red, 255; green, 255; blue, 255 }  ,fill opacity=1 ] (106,80) -- (117.6,68.4) -- (226,68.4) -- (226,114.8) -- (214.4,126.4) -- (106,126.4) -- cycle ; \draw   (226,68.4) -- (214.4,80) -- (106,80) ; \draw   (214.4,80) -- (214.4,126.4) ;
\draw    (243,144) -- (353,144) ;
\draw [shift={(355,144)}, rotate = 180] [color={rgb, 255:red, 0; green, 0; blue, 0 }  ][line width=0.75]    (10.93,-3.29) .. controls (6.95,-1.4) and (3.31,-0.3) .. (0,0) .. controls (3.31,0.3) and (6.95,1.4) .. (10.93,3.29)   ;
\draw  [fill={rgb, 255:red, 255; green, 255; blue, 255 }  ,fill opacity=1 ] (364,79) -- (374.6,68.4) -- (484,68.4) -- (484,207.4) -- (473.4,218) -- (364,218) -- cycle ; \draw   (484,68.4) -- (473.4,79) -- (364,79) ; \draw   (473.4,79) -- (473.4,218) ;
\draw    (364,117) .. controls (396,89) and (436,150) .. (473.4,117) ;
\draw    (364,167) .. controls (403,190) and (434,144) .. (473.4,167) ;
\draw    (473.4,117) .. controls (477,107) and (482,114) .. (484,106) ;
\draw    (473.4,167) .. controls (481,150) and (477,181) .. (484,160) ;
\draw    (53,230) -- (53,176) ;
\draw [shift={(53,174)}, rotate = 90] [color={rgb, 255:red, 0; green, 0; blue, 0 }  ][line width=0.75]    (10.93,-3.29) .. controls (6.95,-1.4) and (3.31,-0.3) .. (0,0) .. controls (3.31,0.3) and (6.95,1.4) .. (10.93,3.29)   ;

\draw (251,123) node [anchor=north west][inner sep=0.75pt]   [align=left] {{\scriptsize under the flow map $\displaystyle \eta $}};
\draw (32,176) node [anchor=north west][inner sep=0.75pt]   [align=left] {$\displaystyle y_{3}$};
\draw (77,72) node [anchor=north west][inner sep=0.75pt]   [align=left] {$\displaystyle L_{3}$};
\draw (77,118) node [anchor=north west][inner sep=0.75pt]   [align=left] {$\displaystyle L_{2}$};
\draw (77,165) node [anchor=north west][inner sep=0.75pt]   [align=left] {$\displaystyle L_{1}$};
\draw (77,210) node [anchor=north west][inner sep=0.75pt]   [align=left] {$\displaystyle 0$};
\draw (148,96) node [anchor=north west][inner sep=0.75pt]   [align=left] {$\displaystyle \Omega _{f}$};
\draw (148,188) node [anchor=north west][inner sep=0.75pt]   [align=left] {$\displaystyle \Omega _{f}$};
\draw (148,144) node [anchor=north west][inner sep=0.75pt]   [align=left] {$\displaystyle \Omega _{e}$};
\draw (404,89) node [anchor=north west][inner sep=0.75pt]   [align=left] {$\displaystyle \Omega _{f}( t)$};
\draw (404,186) node [anchor=north west][inner sep=0.75pt]   [align=left] {$\displaystyle \Omega _{f}( t)$};
\draw (404,134) node [anchor=north west][inner sep=0.75pt]   [align=left] {$\displaystyle \Omega _{e}( t)$};
\draw (160,35) node [anchor=north west][inner sep=0.75pt]   [align=left] {Figure~1. Lagrangian domain to Eulerian domain};
\end{tikzpicture}
\end{center}
To close the system, we impose the usual no-slip boundary
condition 
\begin{align}
&v
= 0
\inon{on~$(0,T)\times\Gammaf $}
\label{EQ09}
\end{align}
on the outer boundary and the periodic boundary conditions for $v$, $q$, and $w$ in the horizontal directions, i.e.,
\begin{align}
v(t,\cdot),
q(t,\cdot),
w(t,\cdot)
~~\text{periodic~in~the~$y_1$~and~$y_2$~directions}.
\label{EQ08}
\end{align}
The initial conditions read
  \begin{align}
  \begin{split}
  & (v(0,\cdot), w(0,\cdot), w_{t}(0,\cdot))
  = (v_{0},w_0,w_1)
  \inon{in~ $ \Omegaf\times\Omegae\times\Omegae$}
  \\
  &
  v_0, w_0, w_1 ~~\text{periodic~in~the~$y_1$~and~$y_2$~directions}.
  \label{EQ10}
  \end{split}
  \end{align}
where $w_0=0$.
For $T>0$, we denote
\begin{align}
H^{r,s}((0,T) \times \Omegaf) = H^{r}((0,T)\scl L^{2}(\Omegaf)) \cap L^{2}((0,T)\scl H^{s}(\Omegaf))
\llabel{EQ05}
,
\end{align}
with the norm
\begin{equation}
\Vert f\Vert_{ H^{r,s}((0,T) \times \Omegaf)}^2
=
\Vert f\Vert_{H^{r}((0,T)\scl L^{2}(\Omegaf)) }^2
+
\Vert f\Vert_{L^{2}((0,T)\scl H^{s}(\Omegaf)) }^2
.
\llabel{EQ129}
\end{equation}
In Sections~\ref{sec05} and~\ref{sec06}, we shall work on a modified system with $T=1$, in order to avoid issue with constant dependence on small time.
For simplicity, we frequently write
$\Vert f\Vert_{H^{r,s}}=\Vert f\Vert_{ H^{r,s}((0,1) \times \Omegaf)}$.
We shall write $H_{\Gammac}^{r,s}$,
involving functions on $(0,1)\times \Gammac$,
for the analogous space corresponding to the boundary set~$\Gammac$.
It is also convenient to abbreviate
\begin{equation}
K^{s}=H^{s/2,s}
,
\llabel{EQ171}
\end{equation}
where $s\geq0$ and domain of integration is $(0,1)\times \Omegaf$ unless stated otherwise.
Similarly, we write
\begin{equation}
\Vert \cdot\Vert_{K^{s}} =
\Vert \cdot\Vert_{H^{s/2,s}}
.
\llabel{EQ173}
\end{equation}

Our main result is the local-in-time existence of strong solutions
to the system \eqref{EQ01}--\eqref{EQ04}, under the matching boundary conditions
\eqref{EQ07}--\eqref{EQ135}, boundary conditions \eqref{EQ09}--\eqref{EQ08},  
and the initial condition~\eqref{EQ10}
with minimal regularity on the initial data.

\cole
\begin{Theorem}
\label{T01} Let $s\in(3/2,3/2+\epsilon_0)$ for $\epsilon_0 \in
(0, 1/20)$. 
Assume that 
$v_{0} \in H^{s}(\Omegaf)$ with $\div v_0 =0$ 
and $w_{1} \in H^{s-1/2}(\Omegae)$ while 
$w_{0} = 0$
with the compatibility conditions
\begin{align} 
&w_{1} = v _{0}
\inon{on~$  \Gammac$}
\label{EQ31}
\\
&v_{0} = 0
\inon{on~ $  \Gammaf$}
\label{EQ32},  
\end{align}
and let $q_0$ be the initial pressure defined by
\begin{align}
\begin{split}
&
\Delta q_{0} = - \partial_{k} v_{0i} \partial_{i}  v_{0k}
\inon{in $\Omegaf$}
\\&
q_0 =    \left(\frac{\partial v_0}{\partial N} - \frac{\partial w_0}{\partial N}\right) \cdot N
\inon{on $\Gammac$}
\\&
\frac{\partial q_0}{\partial N}
= \Delta v \cdot N
\inon{on $\Gammaf$}
.
\end{split}
\llabel{EQ33}
\end{align}
Then system \eqref{EQ01}--\eqref{EQ04} with the coupling conditions \eqref{EQ07}--\eqref{EQ135}, boundary conditions \eqref{EQ09}--\eqref{EQ08}, and the initial condition \eqref{EQ10} admits a unique  solution
\begin{align}
\begin{split}
& v \in
K^{s+1}((0,T) \times \Omegaf) \\
& q \in
H^{s/2-1/2,s}((0,T) \times \Omegaf)  \\
&
\nabla q \in
K^{s-1}((0,T) \times \Omegaf)  \\
&
q|_{\Gammac} \in  K^{s-1/2}((0,T) \times \Gammac)
\\
& w \in C([0,T]\scl H^{s/2+5/4 }(\Omegae)) \\
& w_{t} \in   C([0,T]\scl H^{s/2 +1/4}(\Omegae)) \\
& \eta|_{\Omegaf} \in C^{1}([0,T]\scl  H^{s}(\Omegaf))
\end{split}
\llabel{EQ34}
\end{align}
for some constant $T>0$, where the corresponding norms are bounded by a function of the norms of the initial data.
\end{Theorem}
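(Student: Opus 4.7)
The plan is to construct the solution via a Banach fixed point argument set on the fixed time interval $(0,1)$, thereby avoiding constants that blow up as $T\to 0$. To this end, fix a smooth time cutoff $\theta_T(t)$ equal to $1$ on $[0,T/2]$ and vanishing on $[T,1]$, and incorporate $\theta_T$ into every perturbative term: the integrated velocity that defines the flow map, the cofactor matrix entering as a variable coefficient, and the transmission data produced at each iteration. With this device every interpolation, trace, maximal regularity, and hidden trace regularity estimate is carried out on the fixed cylinder $(0,1)\times\Omegaf$ with $T$-independent implicit constants, while the smallness in $T$ is harvested only through Poincar\'e-type inequalities at integer Sobolev index (cf.~Proposition~\ref{P01}), which is precisely the setting in which such inequalities produce genuine powers of $T$ at the critical regularity $s\in(3/2,3/2+\epsilon_0)$.

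First I would solve the fully linear coupled system. Given coefficients $a_{ij}$ close to $\delta_{ij}$ and given forcing, divergence, and Neumann data in the appropriate anisotropic spaces, a maximal regularity theorem for the Stokes system with Neumann-type stress condition and nonhomogeneous divergence produces $v\in K^{s+1}$, $\nabla q\in K^{s-1}$, $q|_{\Gammac}\in K^{s-1/2}$, and $q\in H^{s/2-1/2,s}$, realizing the regularity stated in the theorem. The wave equation for $w$ is then solved with Dirichlet trace $w_t|_{\Gammac}=v|_{\Gammac}$ inherited from the fluid step; finite-energy well-posedness gives $w\in C H^{s/2+5/4}$ and $w_t\in C H^{s/2+1/4}$, while the hidden regularity trace Lemma~\ref{L03} provides the normal derivative $\partial_N w|_{\Gammac}$ with the regularity required to re-enter the stress-matching condition \eqref{EQ135} at the next step.

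Next, I would iterate. Given $(v^{(n-1)},w^{(n-1)},q^{(n-1)})$, define the cutoff-modified flow map $\eta^{(n-1)}(t,x)=x+\int_0^t\theta_T(\tau)v^{(n-1)}(\tau,x)\,d\tau$ and the coefficient $a^{(n-1)}=(\nabla\eta^{(n-1)})^{-1}$; for $T$ small the integer-index Poincar\'e inequality (Proposition~\ref{P01}) together with Sobolev embedding at regularity $s>3/2$ keeps $a^{(n-1)}$ $C^0$-close to the identity and hence preserves the ellipticity of the Stokes operator. Then $(v^{(n)},w^{(n)},q^{(n)})$ is defined as the solution of the linear problem with coefficient $a^{(n-1)}$ and with right-hand sides built from the previous iterate, each such nonlinear term carrying an explicit factor of either $\theta_T$ or $a^{(n-1)}-I$. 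The trace inequality of Lemma~\ref{L01} is invoked to dominate boundary terms in \eqref{EQ135} and \eqref{EQ07} by interior norms of $v$ and $q$ together with a small-time gain coming from the cutoff. The standard argument then produces a closed ball in the product space $\{(v,q,w,w_t)\}$ and, for $T$ sufficiently small depending on $M$ and the initial data, a contraction inside it.

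The main obstacle I anticipate is the distribution of fractional time derivatives in the contraction estimate: at regularity just above $3/2$ the cofactor matrix has essentially the minimal regularity to be bounded, so derivatives that fall on $a^{(n-1)}-I$ cannot be absorbed crudely, and derivatives that fall on the cutoff $\theta_T$ would formally multiply by $T^{-1}$ and destroy the smallness. The resolution is to split every difference of nonlinear terms so that each piece is either of integer time-order multiplied by $\theta_T$, where Proposition~\ref{P01} furnishes a positive power of~$T$, or of fractional time-order where the derivative acts on a factor with no $\theta_T$ and thus carries a time-independent norm that is already small by the closeness of $a^{(n-1)}$ to the identity. Once contraction is established, uniqueness follows from the same estimate applied to the difference of two solutions, the compatibility conditions \eqref{EQ31}--\eqref{EQ32} ensure the initial traces match so the iterates attain the correct data, and restricting to $[0,T/2]$ removes the cutoff and recovers a genuine strong solution of \eqref{EQ01}--\eqref{EQ10} with the advertised regularity.
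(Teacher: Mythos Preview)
Your overall architecture is right and matches the paper: work on the fixed cylinder $(0,1)\times\Omegaf$, insert a time cutoff into the integrated velocity matching condition and the flow map, solve a linear Stokes--wave step via maximal regularity plus hidden trace regularity, and then run a Banach fixed point on the nonlinear problem. Two points, however, are either misstated or underspecified in a way that would block the argument.

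First, your resolution of the cutoff-derivative issue is too optimistic. You assert that after splitting, each piece carries a \emph{positive} power of $T$; this is not what happens. In the linear Stokes--wave estimate one must control $\bigl\Vert\partial_N\bar w\bigr\Vert_{H^{s/2-1/4}_t L^2_x(\Gammac)}$, which via Lemma~\ref{L03} reduces to $\Vert\bar w\Vert_{H^{s/2+3/4}_t L^2_x(\Gammac)}$ and hence to $\Vert\psi_{\tilde T}v\Vert_{H^{s/2-1/4}_t L^2_x(\Gammac)}$. Interpolating this fractional time norm between $H^1_t$ and $L^2_t$ is the right move, but when the derivative lands on the cutoff you do not fully cancel the $\tilde T^{-1}$: even after subtracting $v_0$ and using the fundamental theorem of calculus, the net contribution is $\tilde T^{-1/7}\Vert v_0\Vert_{H^s}$ (see the paper's \eqref{EQ78}--\eqref{EQ201}). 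This negative power is unavoidable at regularity $s$ just above $3/2$, and it forces the shape of the nonlinear fixed-point ball: the paper takes the radius $M\sim 1+\Vert v_0\Vert_{H^s}^{7}+\Vert w_0\Vert+\Vert w_1\Vert$ and then $\tilde T=1/M^6$, so that $\tilde T^{-1/7}\Vert v_0\Vert_{H^s}\lesssim M$ closes. Your sketch, which expects only positive powers of $T$, would not close without this mechanism. Relatedly, Proposition~\ref{P01} is a \emph{negative} result (no gain in $T$ for the fractional Poincar\'e inequality); it motivates the cutoff device but is not a tool that ``furnishes a positive power of~$T$.''

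Second, Lemma~\ref{L03} alone is not enough for the normal derivative of $w$: it supplies the time regularity $H^{s/2-1/4}_t L^2_x(\Gammac)$, but the space regularity $L^2_t H^{s-1/2}_x(\Gammac)$ needed to feed the Stokes Neumann data comes from the sharper trace result Lemma~\ref{L04}. You should invoke both.
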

\colb
The proof of the theorem is given in Section~\ref{sec06} below.
As it is natural and common, we assume $w_0=0$, but all the statements apply, with only minor changes in the proof, if we assume more generally $w_0\in H^{s+1/2}(\Omegae)$ and $w_0|_{\Gammac}\in H^{s+1/2}(\Gammac)$, thus replacing \eqref{EQ134} with $\eta(0,x)=w_0(x)+x$.

\begin{Remark}
\rm
The range of $\epsilon_0$ is restricted to $(0,1/20)$  due to the quadratic relation from \eqref{EQ201} and Lemmas~\ref{L05} and~\ref{L07}.
More precisely, in order to apply the fixed-point argument in Section~\ref{sec06}, the size of the solution $v$ is bounded from below by a power of $\TT$ in \eqref{EQ500}.
On the other hand, it is bounded from above by another power of $\TT$ in Lemmas~\ref{L05} and~\ref{L07}.
%
Consequently, the range of $\epsilon_0$ is restricted in Theorem~\ref{T01}, in order to satisfy both bounds.
We emphasize that in the linear system of Section~\ref{sec05}, the range of $\epsilon_0$ can be made larger (see Remark~\ref{R02}). 
\end{Remark}

\begin{Remark}
\label{R01}
{\rm
In this paper, for simplicity of exposition, as in \cite{CS1}, we treat the Navier-Stokes equation with the stress tensor
$\mathbb{T}_1= \nabla u - p I$,
where $u$ and $p$ are Eulerian velocity and pressure respectively.
The proofs do not require much change if we consider instead the stress
$\mathbb{T}_2= \nabla u + (\nabla u)^{T}- p I$.
The two stresses lead to the same form of the Navier-Stokes system since
$\div {\mathbb T}_1=\div {\mathbb T}_2=\Delta u - \nabla p$, but they lead to the
different form of \eqref{EQ135}, which for the stress ${\mathbb T}_2$
reads
\begin{equation}
\frac{\partial w_k}{\partial N} 
= 
a_{jl} a_{ml} \partial_{m} v_{k} N^{j} 
+   a_{jl} a_{mk} \partial_{m} v_{l} N^{j}
- a_{jk} q N^{j}
,
\llabel{EQ170}
\end{equation}
where $k=1,2,3$.
The theorem and the proof can easily be adapted to this different stress.
The only important difference is that the proof of the Stokes theorem in~\cite{KLT2} requires that we use
the stress~$T_1$ rather than~$T_3$ from~\cite{GS}.
}
\end{Remark}

The following proposition shows the main difficulty
when applying Poincar\'e-type inequalities in fractional spaces on small time intervals.

\begin{Proposition}
	\label{P01}
Let $0<\alpha<1/2$ and $\beta\in \mathbb{R}$. 
There exists a constant $C\geq 1$ such that
\begin{align}
	\Vert f\Vert_{L^2 (0,T)}
	\leq
	C T^\beta
	( \lfloor f \rfloor_{H^\alpha (0,T)} 
	+
	 \Vert f \Vert_{L^2 (0,T)} 
	)
        \comma f \in C^\infty ([0,T])
	\comma f(0)=0
	\label{EQ887}
\end{align}
for all $T\in(0,1]$
if and only if $\beta \leq 0$.
\end{Proposition}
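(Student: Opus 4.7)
The easy direction, that $\beta\leq 0$ implies the stated inequality, is immediate: since $T\in(0,1]$ gives $T^\beta\geq 1$, one can take $C=1$, as $\|f\|_{L^2(0,T)}\leq\lfloor f\rfloor_{H^\alpha(0,T)}+\|f\|_{L^2(0,T)}$ trivially. The substantive direction is the failure of the inequality when $\beta>0$, and my plan is to establish it by contradiction via a scaling and concentration argument that exploits $\alpha<1/2$.

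I would first fix, once and for all, a profile $g\in C^\infty([0,\infty))$ with $g(0)=0$ and $g\equiv 1$ on $[1,\infty)$, and consider its dilations $g_\lambda(\tau)=g(\lambda\tau)$ on $(0,1)$ for $\lambda>1$. Direct change of variables yields the two key identities
\[
\|g_\lambda\|_{L^2(0,1)}^2 = \frac{1}{\lambda}\int_0^\lambda g(s)^2\,ds
\and
\lfloor g_\lambda\rfloor_{H^\alpha(0,1)}^2 = \lambda^{2\alpha-1}\lfloor g\rfloor_{H^\alpha(0,\lambda)}^2,
\]
from which $\|g_\lambda\|_{L^2(0,1)}^2\to 1$ and $\lfloor g_\lambda\rfloor_{H^\alpha(0,1)}^2\to 0$ as $\lambda\to\infty$. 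The only not-entirely-automatic point is the uniform-in-$\lambda$ bound on $\lfloor g\rfloor_{H^\alpha(0,\lambda)}^2$, which I would verify by splitting the Gagliardo double integral into the regions $(0,1)^2$ (finite by smoothness of $g$), $(0,1)\times(1,\lambda)$ (controlled by $\iint(\sigma-\tau)^{-1-2\alpha}$, integrable thanks to $\alpha\in(0,1/2)$), and $(1,\lambda)^2$ (vanishing because $g\equiv 1$ there).

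With the profile in hand, to rule out any purported constant $C\geq 1$, I would set $f(t)=g_\lambda(t/T)$, so that $f\in C^\infty([0,T])$ with $f(0)=0$, and use the standard dilation relations $\|f\|_{L^2(0,T)}=T^{1/2}\|g_\lambda\|_{L^2(0,1)}$ and $\lfloor f\rfloor_{H^\alpha(0,T)}=T^{1/2-\alpha}\lfloor g_\lambda\rfloor_{H^\alpha(0,1)}$ to rewrite the candidate inequality, after division by $T^{1/2}$ and rearranging, as
\[
(1-CT^\beta)\,\|g_\lambda\|_{L^2(0,1)}\leq CT^{\beta-\alpha}\,\lfloor g_\lambda\rfloor_{H^\alpha(0,1)}.
\]
My plan is then to first fix $T\in(0,1]$ so small that $CT^\beta\leq 1/2$ (possible because $\beta>0$ and $C\geq 1$), which keeps the left-hand coefficient above $1/2$ and freezes the right-hand coefficient $CT^{\beta-\alpha}$ at a finite value; sending $\lambda\to\infty$ then forces the left-hand side above $1/2$ while driving the right-hand side to $0$, producing the desired contradiction.

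I expect the only step requiring any care to be the uniform bound on $\lfloor g\rfloor_{H^\alpha(0,\lambda)}^2$ mentioned above; the remaining work is routine change of variables. The mechanism is transparent: the condition $\alpha<1/2$ lets the family $g_\lambda$ furnish smooth functions vanishing at $0$ whose Gagliardo-to-$L^2$ ratio on $(0,1)$ is arbitrarily small, and this precludes any Poincar\'e-type gain of the form $T^\beta$ with $\beta>0$.
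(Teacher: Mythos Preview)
Your proof is correct and follows essentially the same approach as the paper. Both arguments test the inequality against a smooth step-like function that transitions from $0$ to $1$ on an initial segment of relative length $1/\lambda$ (your $\lambda$ is the paper's $M$), so that the $L^2$ norm stays of order $T^{1/2}$ while the Gagliardo seminorm scales like $T^{1/2-\alpha}\lambda^{\alpha-1/2}\to 0$; the only cosmetic difference is that the paper works directly on $(0,T)$ and concludes $T^\beta\ge 1/C$ for all $T$, whereas you first normalize to $(0,1)$, then rescale and argue by contradiction at a single small $T$.
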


Above, we refer to the usual 
Sobolev-Slobodeckij definition
of the fractional norm
\begin{align}
	\lfloor f \rfloor_{H^\alpha (0,T)} 
	:
	=
	\left(
	\iint_{(0,T)^2} 
	\frac{|f(t_1) - f(t_2)|^2}{|t_1 -t_2|^{2\alpha+1}}
	\, dt_1 dt_2
	\right)^{\frac{1}{2}},
   \label{EQ85}
\end{align}
for $T>0$ and $\alpha \in (0,1)$.


\enew
%
%
%
\colb

\begin{proof}[Proof of Proposition~\ref{P01}]
Since the sufficiency is clear, we only prove the necessity.
With $M \geq 2$,
let $f(t)$ be a smooth function with $f(0) = 0$,  $f(t) = 1$ on $[T/M, T]$, and $0\leq f(t) \leq 1$ on $[0,T]$ such that $\Vert f'(t)\Vert_{L^\infty (0,T)} \leq \tilde{C} M/T$, where $\tilde{C}>0$ is a constant.
Note that
  \begin{equation}
   C^{-1} T^{1/2}\leq \Vert f\Vert_{L^2 (0,T)}\leq T^{1/2}
   .
   \label{EQ89}
  \end{equation}
On the other hand, from \eqref{EQ85} it follows that
\begin{align}
	\lfloor
	f
	\rfloor_{H^\alpha (0,T)}
	\leq
	C_\alpha T^{1/2-\alpha} M^{\alpha -1/2},
   \label{EQ87}
\end{align}
where $C_\alpha>0$ is a constant depending on $\alpha$.
Using \eqref{EQ89} and \eqref{EQ87} in \eqref{EQ887}, we get
\begin{align*}
	C^{-1}T^{1/2}\leq C_\alpha T^{\beta}(T^{1/2-\alpha}M^{\alpha-1/2}+ T^{1/2}).
\end{align*}
Taking $M$ sufficiently large, we obtain $T^{\beta}\geq 1/C$, leading to $\beta\leq 0$.
\end{proof}
\colb

In order to solve the linear Stokes-wave system in Section~\ref{sec05}, we consider the linear Stokes system with a nonhomogeneous divergence
\begin{align}
\begin{split}
&u_{t} -  \Delta u + \nabla  p = f
\inon{in~$ (0,T)\times \Omegaf  $}
\\
&
\div u = g
\inon{in~$ (0,T)\times\Omegaf    $}
.
\end{split}
\label{EQ15}
\end{align}
The system is supplemented with two boundary conditions and the initial data
\begin{align}
&\frac{\partial u}{\partial N}
- p N
= 
\hff
\inon{on~ $ (0,T)\times\Gammac $}
\label{EQ06}
\\&
u = \hf
\inon{on~$ (0,T)\times\Gammaf$}
\label{EQ23}
\\&u(0,\cdot)
=
u_0
\inon{in~ $\Omegaf$}
\label{EQ130}
\end{align}
We also impose the compatibility condition
\begin{align}
u_0|_{\Gamma_2}=\hf|_{t=0}
.
\label{EQ61}
\end{align}

We recall the following maximal regularity theorem for the Neumann-Stokes problem.

\cole
\begin{Lemma}[\cite{KLT2}]
\label{T02} 
Let $s\in [1,3/2)\cup (3/2, 2)$. 
Assume that
\begin{align}
u_{0} \in H^{s}(\Omegaf ) 
\llabel{EQ156}
\end{align}
and
\begin{align}
\begin{split}
& 
(f,g,\hff,\hf)
\in
(
K^{s-1},
K^{s}, 
H^{s/2-1/4,s-1/2}_{\Gammac},
H^{s/2+1/4,s+1/2}_{\Gammaf}
)
,
\end{split}
\llabel{EQ35}
\end{align}
with the structural condition
\begin{equation}
g_{t} 
= 
\tilde{g}+ \div b
,
\llabel{EQ148}
\end{equation}
where
\begin{equation}
\tilde{g}, b  \in  K^{s-1} ((0,T)\times \Omegaf)
.
\llabel{EQ149}
\end{equation}
Then there exists a unique solution $(u,p)$ to the system 
\eqref{EQ15}--\eqref{EQ61}
satisfying 
\begin{align}
\begin{split}
&
\Vert u \Vert_{K^{ s+1} ((0,T)\times \Omegaf)} 
+ 
\Vert p  \Vert_{H^{s/2-1/2, s} ((0,T)\times \Omegaf)}
+ 
\Vert \nabla p  \Vert_{K^{s-1} ((0,T)\times \Omegaf)}
+ 
\Vert p  \Vert_{H^{s/2-1/4, s-1/2} ((0,T)\times \Gammac)}
\\&\indeq
\les
\Vert u_{0} \Vert_{H^{s}} 
+  
\Vert f \Vert_{K^{s-1} ((0,T)\times \Omegaf)} 
+ 
\Vert g \Vert_{K^{s} ((0,T)\times \Omegaf)} 
+ 
\Vert \tilde{g} \Vert_{K^{ s-1} ((0,T)\times \Omegaf)} 
+ 
\Vert b \Vert_{K^{ s-1} ((0,T)\times \Omegaf)} 
\\&\indeq\indeq
+ 
\Vert \hff \Vert_{H^{s/2-1/4, s-1/2} ((0,T)\times \Gammac)}
+ 
\Vert \hf \Vert_{H^{s/2+1/4, s+1/2} ((0,T)\times \Gammaf)}
,
\llabel{EQ17}
\end{split}
\end{align}
where the implicit constant depends on~$\Omegaf$ and $T$.
\end{Lemma}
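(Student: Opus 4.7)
The plan is to reduce \eqref{EQ15}--\eqref{EQ61} to a system with homogeneous divergence and homogeneous Dirichlet data on $\Gammaf$, and then apply a standard maximal regularity result for the Stokes system with mixed Dirichlet-Neumann boundary conditions in the anisotropic spaces $K^{s+1}$, in the style of Grubb-Solonnikov \cite{GS}.

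First, I would lift the Dirichlet trace $\hf$. Since $\hf \in H^{s/2+1/4,s+1/2}_{\Gammaf}$ and the compatibility condition~\eqref{EQ61} holds, the anisotropic trace theorem for cylinders yields an extension $U \in K^{s+1}((0,T)\times \Omegaf)$ with $U|_{\Gammaf}=\hf$, $U|_{\Gammac}=0$, and $U(0,\cdot)|_{\Gammaf}=u_0|_{\Gammaf}$. Replacing $u$ by $u-U$ absorbs $\hf$ into the forcing, modifying $f$ by $U_t-\Delta U\in K^{s-1}$, $g$ by $-\div U$, and $\hff$ by $-\partial U/\partial N$ on $\Gammac$, all with acceptable bounds.

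Second, I would lift the divergence. A Bogovskii-type operator $\mathcal{B}$ on $\Omegaf$, adapted to vanish on $\Gammaf$, yields a right inverse $V_1=\mathcal{B}(g)\in L^2(0,T;H^{s+1}(\Omegaf))$ with $\div V_1=g$. To recover the missing time regularity, I exploit the structural decomposition $g_t=\tilde g+\div b$: differentiating in time gives $\partial_t V_1=\mathcal{B}(\tilde g)+\mathcal{B}(\div b)$, and since $\mathcal{B}\circ\div$ differs from the identity by a smoothing operator on a suitable subspace, one obtains $\partial_t V_1\in K^{s-1}$ with the bound $\Vert \partial_t V_1\Vert_{K^{s-1}}\lesssim \Vert \tilde g\Vert_{K^{s-1}}+\Vert b\Vert_{K^{s-1}}$. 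Combined with the spatial estimate, this gives $V_1\in K^{s+1}$. Subtracting $V_1$ reduces to $g=0$ at the cost of adding $-\partial_t V_1+\Delta V_1$ to $f$ and $\partial V_1/\partial N$ to $\hff$.

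After these two reductions, the remaining problem is a Stokes system with homogeneous divergence, zero Dirichlet trace on $\Gammaf$, Neumann-type data on $\Gammac$, and divergence-free initial datum in $H^s(\Omegaf)$ vanishing on $\Gammaf$. For this baseline problem and $s\in[1,3/2)\cup(3/2,2)$, the Grubb-Solonnikov pseudodifferential calculus applied to the Stokes resolvent (cf.~\cite{GS,KLT2}) delivers the asserted control of $u$ in $K^{s+1}$, of $p$ in $H^{s/2-1/2,s}$, and of $\nabla p$ in $K^{s-1}$. The pressure trace bound on $\Gammac$ is then read off directly from~\eqref{EQ06}: since $pN=\partial u/\partial N-\hff$ there, the anisotropic trace inequality $K^{s+1}\hookrightarrow H^{s/2-1/4,s-1/2}_{\Gammac}$ for $\partial u/\partial N$ delivers the estimate for $p|_{\Gammac}$. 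Uniqueness follows from the same estimate applied to the difference of two solutions with zero data.

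The step I expect to be the main obstacle is the divergence lift with sharp temporal regularity. A naive Bogovskii lift of $g$ lands only in $L^2(H^{s+1})$, missing the crucial $H^{s/2+1/2}(L^2)$ component, and it is precisely the structural hypothesis $g_t=\tilde g+\div b$ that is designed to compensate. One must carry it through while (i) preserving the vanishing trace of the lift on $\Gammaf$, (ii) not increasing the spatial regularity demand on $(\tilde g,b)$, and (iii) respecting the borderline $s=3/2$, where the compatibility~\eqref{EQ61} and the trace of a divergence-free field both fail to be well defined in the usual sense. The exclusion $s\neq 3/2$ in the statement reflects exactly this limitation.
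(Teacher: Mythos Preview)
The paper does not prove this lemma; it is quoted from \cite{KLT2} (and the paper merely points to \cite{MZ1,MZ2} for related results), so there is no in-paper proof to compare your proposal against. Your outline is the standard reduction-plus-maximal-regularity strategy one would expect for such a statement, and it correctly identifies the role of the structural hypothesis $g_t=\tilde g+\div b$ in recovering the missing time regularity of the divergence lift.

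That said, two points in your sketch would need sharpening before it counts as a proof. First, the assertion that $\mathcal B\circ\div$ ``differs from the identity by a smoothing operator on a suitable subspace'' is not true in the form stated: $\mathcal B\circ\div$ is order-zero, not smoothing, and its boundedness on $K^{s-1}$ is exactly what must be established (this is where the $\tilde g$ and $b$ bounds enter separately). Second, demanding that the Dirichlet lift $U$ vanish on $\Gammac$ is an extra trace constraint that the anisotropic extension theorem does not give for free; one usually lifts only on $\Gammaf$ and lets the resulting $\partial U/\partial N|_{\Gammac}$ be absorbed into $\hff$, without forcing $U|_{\Gammac}=0$. Neither issue is fatal to the approach, but both are places where the argument as written is incomplete rather than merely terse.
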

\colb

See \cite{MZ1,MZ2} for a related maximal regularity theorem.

\startnewsection{Space-time trace, interpolation, and hidden regularity inequalities}{sec03}
In this section, we recall four auxiliary results needed in the fixed-point arguments.
The first lemma provides an estimate for the trace in a space-time norm and is an essential ingredient 
when solving the linear Stokes problem with nonhomogeneous divergence and 
when constructing
solutions to the linear Stokes-wave system in the next section.

\cole
\begin{Lemma}[\cite{KLT1}]
\label{L01}
Let $r>1/2$, $\theta\geq0$, and $T>0$.
If 
$u\in 
L^{2}([0,T]\scl H^{r}(\Omegaf))
\cap
H^{2\theta r/(2r-1)} ([0,T], L^{2}(\Omegaf))$, then 
$
u \in H^{\theta} ([0,T], L^{2}(\Gammac))
$, and for all $\epsilon \in(0,1]$,
we have the inequality
\begin{align}
\begin{split}
\Vert u\Vert_{H^{\theta}([0,T]\scl  L^{2}(\Gammac))}
\leq
\epsilon
\Vert u \Vert_{H^{2\theta r/(2r-1)}([0,T]\scl  L^{2}(\Omegaf))}
+ 
C\epsilon^{1-2r}
\Vert u \Vert_{L^{2}([0,T]\scl  H^{r}(\Omegaf))} 
,
\end{split}
\llabel{EQ65}
\end{align}
where $C>0$ is a constant, which depends on $\Omegaf$ and~$T$.
\end{Lemma}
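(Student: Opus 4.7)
The plan is to reduce the inequality to a one-dimensional weighted Cauchy--Schwarz estimate in Fourier variables. First, extend $u$ from $[0,T]\times\Omegaf$ to $\RR_t\times\RR^3$ by a standard Stein-type extension, whose operator norm depends on $\Omegaf$ and $T$ but not on $u$. Since the reference domain is flat with periodic horizontal directions $y'=(y_1,y_2)$ and the interface $\Gammac$ is given by $y_3=L_1$ or $L_2$, one may take Fourier transforms in $t$ and $y'$ (with a series in $\xi'$) and a further Fourier transform in $y_3$, yielding $\hat u(\tau,\xi',\xi_3)$. By Plancherel, it suffices to control
$$\sum_{\xi'}\int_{\RR}(1+|\tau|^{2\theta})\bigl|\hat u(\tau,\xi',L_1)\bigr|^2\,d\tau,$$
where $\hat u(\tau,\xi',L_1)$ is recovered by integrating $\hat u(\tau,\xi',\xi_3)$ against $e^{iL_1\xi_3}$ in $\xi_3$; the case $y_3=L_2$ is identical.

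Set $\sigma=2\theta r/(2r-1)$ and, for each fixed $(\tau,\xi')$, apply the weighted Cauchy--Schwarz inequality in $\xi_3$ with weight $w(\xi_3)^2=w_0+\lambda|\xi_3|^{2r}$, where $w_0=1+|\tau|^{2\sigma}$ and $\lambda>0$ is a free parameter to be optimized. A direct rescaling gives
$$\int_{\RR}\frac{d\xi_3}{w_0+\lambda|\xi_3|^{2r}}\leq C\lambda^{-1/(2r)}w_0^{-(2r-1)/(2r)},$$
and this integral is finite precisely because $r>1/2$ (this is where the hypothesis on $r$ enters). The inequality then reads
$$\bigl|\hat u(\tau,\xi',L_1)\bigr|^2\leq C\lambda^{-1/(2r)}w_0^{1/(2r)}\int_{\RR}|\hat u|^2\,d\xi_3+C\lambda^{(2r-1)/(2r)}w_0^{-(2r-1)/(2r)}\int_{\RR}|\xi_3|^{2r}|\hat u|^2\,d\xi_3.$$

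Next, multiply by $(1+|\tau|^{2\theta})$ and integrate over $(\tau,\xi')$. The identity $2\sigma(2r-1)/(2r)=2\theta$, which holds by the very definition of $\sigma$, implies the pointwise bounds $(1+|\tau|^{2\theta})w_0^{1/(2r)}\leq C(1+|\tau|^{2\sigma})$ and $(1+|\tau|^{2\theta})w_0^{-(2r-1)/(2r)}\leq C$. Plancherel then converts the right-hand side back into $\Vert u\Vert^2_{H^\sigma(L^2(\Omegaf))}$ and a term dominated by $\Vert u\Vert^2_{L^2(H^r(\Omegaf))}$, giving
$$\Vert u\Vert^2_{H^\theta(L^2(\Gammac))}\leq C\lambda^{-1/(2r)}\Vert u\Vert^2_{H^\sigma(L^2(\Omegaf))}+C\lambda^{(2r-1)/(2r)}\Vert u\Vert^2_{L^2(H^r(\Omegaf))}.$$
Choosing $\lambda=\epsilon^{-4r}$ gives $\lambda^{-1/(2r)}=\epsilon^{2}$ and $\lambda^{(2r-1)/(2r)}=\epsilon^{2-4r}$, and taking square roots (after renaming the constant) delivers the claimed bound with prefactors $\epsilon$ and $C\epsilon^{1-2r}$.

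The main technical point is the simultaneous balance of exponents: the weight $w$ must be designed so that multiplication by $(1+|\tau|^{2\theta})$ absorbs the factor $w_0^{1/(2r)}$ to produce exactly $(1+|\tau|^{2\sigma})$, while absorbing $w_0^{-(2r-1)/(2r)}$ to produce a constant, and this forces precisely $\sigma=2\theta r/(2r-1)$. The flatness of $\Gammac$ together with the periodicity in $y'$ allows the Fourier argument to be carried out globally; for a curved boundary one would localize by a partition of unity and flatten by local charts, but no such device is needed here.
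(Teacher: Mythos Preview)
The paper does not supply its own proof of this lemma; it is quoted verbatim from \cite{KLT1} as one of the ``auxiliary results needed in the fixed-point arguments,'' so there is no in-paper argument to compare against.

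Your argument is correct and is, in fact, the standard route for such anisotropic trace estimates: extend, pass to Fourier variables, and use a weighted Cauchy--Schwarz in the normal frequency $\xi_3$ with the weight tuned so that the $\tau$-powers balance. The key algebraic identity $2\sigma(2r-1)/(2r)=2\theta$ that forces $\sigma=2\theta r/(2r-1)$ is exactly what drives the exponent in the statement, and your use of $r>1/2$ to make $\int(w_0+\lambda|\xi_3|^{2r})^{-1}\,d\xi_3$ finite is precisely where that hypothesis enters. Two minor points worth making explicit in a final write-up: (i) the extension should be performed in the spatial variable $y_3$ only (at each fixed time), so that it commutes with the time-Fourier transform and is automatically bounded on $H^{\sigma}_t(L^2_x)$ as well as on $L^2_t(H^r_x)$; and (ii) the trace of the extended function at $y_3=L_1$ coincides with the one-sided trace of $u$ from $\Omegaf$ because the extension lies in $H^r$ with $r>1/2$, hence has a single well-defined trace on the hyperplane. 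With those caveats, the proof is complete and self-contained.
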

\colb

The second lemma provides a space-time interpolation inequality needed in several places in the next two sections.

\cole
\begin{Lemma}[\cite{KLT1}]
\label{L02}
Let $\alpha,\beta>0$ and~$T>0$.
If 
$u\in H^{\alpha}([0,T]\scl L^2(\Omegaf))
\cap
L^2([0, T], H^{\beta}(\Omegaf))$, then we have that
$
u \in H^{\theta} ([0, T], H^{\lambda}(\Omegaf))
$ for all $\theta\in(0,\alpha)$ and $\lambda\in(0,\beta)$ such that
\begin{equation}
\frac{\theta}{\alpha} + \frac{\lambda}{\beta} \leq 1
.
\llabel{EQ66}
\end{equation}
In addition, for all $\epsilon \in (0,1]$,
we have the inequality
\begin{align}
\begin{split}
\Vert u\Vert_{H^{\theta} ([0,T], H^{\lambda}(\Omegaf))}
\leq
\epsilon
\Vert u \Vert_{H^{\alpha}([0,T]\scl L^2(\Omegaf))}
+ 
C \epsilon^{-\frac{\theta \beta}{\alpha\lambda}}
\Vert u \Vert_{L^2([0, T], H^{\beta}(\Omegaf))} 
,
\end{split}
\llabel{EQ70}
\end{align}
where $C>0$ is a constant depending on $\Omegaf$ and~$T$.
\end{Lemma}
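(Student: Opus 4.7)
The plan is to reduce the inequality to a pointwise Fourier multiplier estimate on the whole space after an extension, handle the equality case $\theta/\alpha + \lambda/\beta = 1$ via weighted Young's inequality, and then recover the general inequality by monotonicity in~$\lambda$.

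First, I construct an extension $\tilde u$ of $u$ to $\mathbb{R} \times \mathbb{R}^3$ that is simultaneously bounded from $H^\alpha([0,T], L^2(\Omegaf))$ to $H^\alpha(\mathbb{R}, L^2(\mathbb{R}^3))$ and from $L^2([0,T], H^\beta(\Omegaf))$ to $L^2(\mathbb{R}, H^\beta(\mathbb{R}^3))$. Since $\Omegaf$ consists of two periodic slabs with flat boundary pieces, this is achieved by reflection across $y_3 = 0, L_1, L_2, L_3$ combined with a Stein-type extension in $t$, with constants depending on $\Omegaf$ and~$T$. The restriction inequality $\Vert u\Vert_{H^\theta([0,T], H^\lambda(\Omegaf))} \lesssim \Vert \tilde u\Vert_{H^\theta(\mathbb{R}, H^\lambda(\mathbb{R}^3))}$ is automatic.

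Next I treat the equality case $\theta/\alpha + \lambda/\beta = 1$. By Plancherel in $(t,x)$ with dual variables $(\tau, \xi)$, the three mixed Sobolev norms of $\tilde u$ are equivalent to weighted $L^2$ norms of $\widehat{\tilde u}$ with respective weights $(1+|\tau|^{2\theta})(1+|\xi|^{2\lambda})$, $1+|\tau|^{2\alpha}$, and $1+|\xi|^{2\beta}$. Thus the lemma reduces to the pointwise multiplier estimate
\begin{equation*}
(1+|\tau|^{2\theta})(1+|\xi|^{2\lambda}) \leq 4 + \epsilon^2 \bigl(1+|\tau|^{2\alpha}\bigr) + C \epsilon^{-2\theta\beta/(\alpha\lambda)} \bigl(1+|\xi|^{2\beta}\bigr),
\end{equation*}
valid for all $(\tau, \xi)$ and $\epsilon \in (0,1]$. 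Expanding the product into four summands, the pure terms are dominated by $|\tau|^{2\theta} \leq 1 + |\tau|^{2\alpha}$ and $|\xi|^{2\lambda} \leq 1 + |\xi|^{2\beta}$ since $\theta < \alpha$ and $\lambda < \beta$, and the cross term is handled by weighted Young's inequality with conjugate exponents $p = \alpha/\theta$ and $q = \beta/\lambda$, whose reciprocals sum to one by the equality hypothesis:
\begin{equation*}
|\tau|^{2\theta}|\xi|^{2\lambda} = \bigl(\delta |\tau|^{2\theta}\bigr)\bigl(\delta^{-1} |\xi|^{2\lambda}\bigr) \leq \frac{\theta}{\alpha} \delta^{\alpha/\theta} |\tau|^{2\alpha} + \frac{\lambda}{\beta} \delta^{-\beta/\lambda} |\xi|^{2\beta}.
\end{equation*}
Choosing $\delta^{\alpha/\theta} = \epsilon^2$ produces $\delta^{-\beta/\lambda} = \epsilon^{-2\theta\beta/(\alpha\lambda)}$, which matches the advertised exponent.

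For the general case $\theta/\alpha + \lambda/\beta \leq 1$, I enlarge $\lambda$ to $\tilde\lambda := \beta(1 - \theta/\alpha) \in [\lambda, \beta)$, so that $\theta/\alpha + \tilde\lambda/\beta = 1$. Since $\tilde\lambda \geq \lambda$ and $\Omegaf$ is bounded, we have the continuous embedding $H^{\tilde\lambda}(\Omegaf) \hookrightarrow H^\lambda(\Omegaf)$, hence $\Vert u\Vert_{H^\theta([0,T], H^\lambda(\Omegaf))} \lesssim \Vert u\Vert_{H^\theta([0,T], H^{\tilde\lambda}(\Omegaf))}$. Applying the equality case to the pair $(\theta, \tilde\lambda)$ and noting that $\epsilon^{-\theta\beta/(\alpha\tilde\lambda)} \leq \epsilon^{-\theta\beta/(\alpha\lambda)}$ for $\epsilon \in (0,1]$ yields the stated inequality. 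The principal technical step is exhibiting one extension operator bounded simultaneously in the two anisotropic mixed-norm spaces; once that is in place, the Fourier/Young calculation is purely algebraic and automatically produces the sharp exponent $-\theta\beta/(\alpha\lambda)$ on the equality boundary.
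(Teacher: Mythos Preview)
The paper does not prove this lemma; it is quoted from \cite{KLT1} without argument, so there is no in-paper proof to compare against. Your Fourier-side approach via a simultaneous extension and the pointwise weight inequality is the standard route and is correct in outline.

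One small point deserves tightening: the estimate $|\tau|^{2\theta}\le 1+|\tau|^{2\alpha}$ does not by itself fit into the $\epsilon^2(1+|\tau|^{2\alpha})$ slot when $\epsilon$ is small. You need the same weighted Young split on this pure term, i.e.\ $|\tau|^{2\theta}\le \epsilon^2|\tau|^{2\alpha}+C\epsilon^{-2\theta/(\alpha-\theta)}$. In the equality case $\theta/\alpha+\lambda/\beta=1$ one checks $2\theta/(\alpha-\theta)=2\theta\beta/(\alpha\lambda)$, so this leftover constant is absorbed into the $C\epsilon^{-2\theta\beta/(\alpha\lambda)}(1+|\xi|^{2\beta})$ term. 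With that adjustment the multiplier inequality and hence the lemma go through.
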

\colb

Next we recall the hidden regularity result for the wave equation 
\begin{align}
\begin{split}
&
w_{tt}   - \Delta{w}= 0
\inon{in~ $ (0,T)\times\Omegae $}
\\&
w = \psi    
\inon{on~$  (0,T)\times\partial\Omegae $}
\\&
\text{$w$~periodic~in~the~$y_1$~and~$y_2$~directions}
,
\end{split}
\label{EQ14}
\end{align}
with the initial data 
\begin{equation}
(w,w_t)(0)= (w_0,w_1) 
\inon{in~ $\Omegae $}  
\label{EQ158}
\end{equation}
from~\cite{LLT}.

\cole
\begin{Lemma}[\cite{LLT}]
\label{L03}
Assume
that
$(w_{0},w_{1}) \in H^{\beta}( \Omegae)\times H^{\beta -1}( \Omegae)$,
where $\beta \geq1$,
and 
$$\psi \in C([0,T]\scl  H^{\beta-1/2}(\partial \Omegae))    \cap H^{\beta, \beta}((0,T)\times \partial \Omegae),$$
with the compatibility condition $\psi|_{t=0} = w_0 |_{\Gammac}$ and $\pt \psi |_{t=0} = w_1|_{\Gammac}$.
Then there exists a solution $w$ 
of \eqref{EQ14}--\eqref{EQ158}, which satisfies the estimate
\begin{align}
\begin{split}
&\Vert w \Vert_{C([0,T]\scl   H^{\beta}( \Omegae))} 
+ \Vert w_{t} \Vert_{C([0,T]\scl   H^{\beta-1}( \Omegae))}
+ \left\Vert 
\frac{\partial w}{\partial N}
\right\Vert_{H^{\beta-1, \beta -1}((0,T) \times \partial \Omegae)} 
\\&\indeq
\lec
\Vert w_{0} \Vert_{H^{\beta}( \Omegae)} 
+
\Vert w_{1} \Vert_{H^{\beta-1}( \Omegae)} 
+
\Vert \psi \Vert_{H^{\beta, \beta}((0,T) \times \partial \Omegae)}
,
\end{split}
\llabel{EQ139}
\end{align}
where the implicit constant depends on $\Omegae$ and $T$.
\end{Lemma}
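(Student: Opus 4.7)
The plan is to decompose $w$ into two pieces matching the two sources of inhomogeneity: write $w=w^{(i)}+w^{(b)}$, where $w^{(i)}$ solves the wave equation with homogeneous Dirichlet boundary data and initial data $(w_0,w_1)$, while $w^{(b)}$ solves the wave equation with zero initial data and Dirichlet boundary data $\tilde\psi:=\psi-w^{(i)}|_{\partial\Omegae}$. The compatibility conditions $\psi|_{t=0}=w_0|_{\Gammac}$ and $\pt\psi|_{t=0}=w_1|_{\Gammac}$ guarantee that $\tilde\psi$ vanishes up to first order at $t=0$, so extending by zero in $t$ introduces no artificial singularity at the initial time.

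For $w^{(i)}$ I would first obtain the energy estimate $\Vert w^{(i)}\Vert_{C([0,T]\scl H^\beta(\Omegae))}+\Vert w^{(i)}_t\Vert_{C([0,T]\scl H^{\beta-1}(\Omegae))}\lec \Vert w_0\Vert_{H^\beta}+\Vert w_1\Vert_{H^{\beta-1}}$, starting from $\beta=1$ by energy conservation, then reaching integer $\beta$ by differentiating in $t$ and in the tangential torus directions $y_1,y_2$, recovering $y_3$ derivatives from the equation via $\partial_{33}^2 w^{(i)}=w^{(i)}_{tt}-\partial_{11}^2 w^{(i)}-\partial_{22}^2 w^{(i)}$, and filling in non-integer $\beta$ by interpolation. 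The hidden regularity $\partial w^{(i)}/\partial N\in L^2((0,T)\times \partial\Omegae)$ at $\beta=1$ is the classical Lasiecka--Lions--Triggiani multiplier identity: pick a smooth vector field $h$ on $\bar\Omegae$ with $h\cdot N\geq c_0>0$ on $\partial\Omegae$, multiply the equation by $h\cdot\nabla w^{(i)}$, and integrate by parts on $(0,T)\times\Omegae$. Since $w^{(i)}=0$ on $\partial\Omegae$, one has $\nabla w^{(i)}=(\partial w^{(i)}/\partial N)N$ there, and the surface contribution $\int_0^T\!\int_{\partial\Omegae}(h\cdot N)|\partial w^{(i)}/\partial N|^2\,dS\,dt$ emerges, controlled by $E(0)+E(T)$. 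Higher-order versions $H^{\beta-1,\beta-1}$ are obtained by differentiating in $t$ and the tangential variables and re-applying this identity, using that tangential derivatives commute with the equation and with the Dirichlet condition.

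For $w^{(b)}$ I would use Lions' transposition method. Given $f\in L^1([0,T]\scl L^2(\Omegae))$, let $\phi$ solve the backward wave equation $\phi_{tt}-\Delta\phi=f$ with $\phi(T)=\pt\phi(T)=0$ and $\phi|_{\partial\Omegae}=0$. Applying the $w^{(i)}$-type estimate to $\phi$ shows that $f\mapsto \partial\phi/\partial N|_{\partial\Omegae}$ is continuous from $L^1([0,T]\scl L^2(\Omegae))$ into $L^2((0,T)\times \partial\Omegae)$, and defining
\begin{equation*}
\int_0^T \langle w^{(b)}(t),f(t)\rangle \, dt
= -\int_0^T\!\int_{\partial\Omegae}\tilde\psi\, \frac{\partial\phi}{\partial N}\,dS\,dt
\end{equation*}
produces a weak solution with base bound $\Vert w^{(b)}\Vert_{C([0,T]\scl L^2)}+\Vert w^{(b)}_t\Vert_{C([0,T]\scl H^{-1})}\lec \Vert\tilde\psi\Vert_{L^2((0,T)\times\partial\Omegae)}$. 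Higher $\beta$ is reached by tangential and time differentiation of the boundary data combined with interpolation, and the estimate on $\partial w^{(b)}/\partial N$ at the level $H^{\beta-1,\beta-1}$ comes from a further multiplier-type identity that now retains the extra boundary contribution due to $\tilde\psi\not\equiv 0$.

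The main obstacle is the subtle interplay between temporal and spatial regularity at non-integer $\beta$ encoded in the $H^{\beta,\beta}$ norm on $(0,T)\times\partial\Omegae$: one must arrange the tangential/time differentiation so that the compatibility between initial and boundary data produces no singular terms at $t=0$ (this is precisely where the hypotheses $\psi|_{t=0}=w_0|_{\Gammac}$ and $\pt\psi|_{t=0}=w_1|_{\Gammac}$ are essential), and one must interpolate carefully between integer levels to cover the whole range $\beta\geq 1$. The flat tangential directions $y_1,y_2$ furnished by the torus significantly simplify this, since tangential derivatives commute cleanly with both the equation and the boundary operator.
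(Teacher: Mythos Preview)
The paper does not prove Lemma~\ref{L03}; it is stated with the citation \cite{LLT} and used as a black box. So there is no ``paper's own proof'' to compare against here.

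That said, your sketch is a faithful outline of the classical Lasiecka--Lions--Triggiani argument: the decomposition $w=w^{(i)}+w^{(b)}$, the multiplier identity with a vector field $h$ satisfying $h\cdot N\geq c_0>0$ to extract $\partial w^{(i)}/\partial N\in L^2$, the transposition definition of $w^{(b)}$ via the backward adjoint problem, and the lift to higher $\beta$ by tangential/time differentiation plus interpolation. Your remark that the flat torus geometry lets tangential derivatives commute cleanly with both the operator and the boundary condition is exactly what makes the bootstrap to $H^{\beta-1,\beta-1}$ clean in this setting, and your use of the compatibility conditions to ensure $\tilde\psi$ vanishes to first order at $t=0$ (so the zero extension does not create a jump) is the correct reason those hypotheses appear. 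The one place where a reader might want more detail is the normal-derivative estimate for $w^{(b)}$ at the top level: in the nonhomogeneous Dirichlet case the multiplier identity produces additional boundary terms involving tangential derivatives of $\tilde\psi$, and one has to check these are controlled by $\Vert\tilde\psi\Vert_{H^{\beta,\beta}}$; you allude to this but do not spell it out. None of this is a gap in the strategy---it is precisely what \cite{LLT} carries out.
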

\colb

At last, we state an essential 
trace regularity result for the wave equation from \cite{RV}.

\cole
\begin{Lemma}
[\cite{RV}]
\label{L04}
Assume that
$(w_{0},w_{1}) \in H^{\beta+2}( \Omegae)\times H^{\beta +1}( \Omegae)$,
where $0< \beta<5/2$,
and 
$$\psi \in L^2 ((0,T)\scl  H^{\beta +2} ( \partial \Omegae))    \cap H^{\beta/2 +1}((0,T)\scl H^{\beta/2 +1}( \partial \Omegae)),$$
with the compatibility condition $\pt \psi |_{t=0} = w_1 |_{\Gammac}$.
Then there exists a solution $w$
of \eqref{EQ14}--\eqref{EQ158} such that
\begin{align}
\begin{split}
\left\Vert 
\frac{\partial w}{\partial N}
\right\Vert_{L^2 ( (0,T), H^{\beta +1} ( \partial \Omegae))} 
&\lec
\Vert w_{0} \Vert_{H^{\beta +2}( \Omegae)} 
+ 
\Vert w_{1} \Vert_{H^{\beta +1}( \Omegae)} 
+
\Vert \psi \Vert_{L^2 ((0,T)\scl H^{\beta +2}( \partial \Omegae))} 
\\&\indeq 
+ 
\Vert \psi \Vert_{H^{\beta/2 +1}((0,T)\scl H^{\beta/2 +1}( \partial \Omegae))}
,
\end{split}
\llabel{EQ139}
\end{align}
where the implicit constant depends on $\Omegae$ and $T$.
\end{Lemma}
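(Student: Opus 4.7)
The plan is to follow the Fourier-analytic argument of Raymond and Vanninathan, exploiting the fact that $\Omegae = \mathbb{T}^2 \times (L_1, L_2)$ is a flat slab, so the tangential-temporal Fourier transform reduces the wave equation to a one-dimensional ODE in $y_3$. First I would reduce to the case of zero initial data by subtracting a free-wave extension of $(w_0, w_1)$ to $\mathbb{T}^2 \times \mathbb{R}$, which has the same interior regularity as the original data and whose trace on $\partial\Omegae$ can be absorbed into the modified $\psi$ without worsening the required norms; the remaining part then satisfies the wave equation with zero initial data, Dirichlet boundary data in $L^2(H^{\beta+2}) \cap H^{\beta/2+1}(H^{\beta/2+1})$, and, by the assumed compatibility, $\partial_t \psi|_{t=0} = 0$.

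Next I would extend this modified $\psi$ from $(0,T)$ to $\mathbb{R}_t$ by zero for $t < 0$ and a smooth cutoff for $t > T$, preserving the norms up to a $T$-dependent constant, and apply the tangential-temporal Fourier transform with dual variables $(\tau, \xi') \in \mathbb{R} \times \mathbb{Z}^2$. The wave equation then becomes a two-point boundary value problem in $y_3 \in (L_1, L_2)$:
\begin{equation*}
-\partial_{y_3}^2 \hat w + (|\xi'|^2 - \tau^2)\hat w = 0, \qquad \hat w(\tau, \xi', L_i) = \hat\psi(\tau, \xi', L_i), \quad i = 1,2.
\end{equation*}
Setting $\mu = \sqrt{|\xi'|^2 - \tau^2}$ with the appropriate branch, an explicit solution of this two-point problem expresses $\widehat{\partial_N w}(\tau, \xi', L_i)$ as a linear combination of $\hat\psi(\tau, \xi', L_1)$ and $\hat\psi(\tau, \xi', L_2)$ whose coefficients are bounded in modulus by $C(1 + \sqrt{|\tau|^2 + |\xi'|^2})$, uniformly in the slab thickness and across the glancing region $\mu \to 0$.

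By Plancherel, the squared left-hand side of the desired estimate is then bounded by
\begin{equation*}
\int (1 + |\xi'|^2)^{\beta+1}(1 + |\tau|^2 + |\xi'|^2)|\hat\psi(\tau, \xi')|^2 \, d\tau \, d\xi',
\end{equation*}
which I would split according to whether $|\tau| \leq 1 + |\xi'|$ or $|\tau| > 1 + |\xi'|$. On the first region the integrand is bounded by $(1+|\xi'|^2)^{\beta+2}|\hat\psi|^2$, delivering $\|\psi\|_{L^2(H^{\beta+2})}^2$; on the second it is bounded by $(1+|\tau|^2)^{\beta/2+1}(1+|\xi'|^2)^{\beta/2+1}|\hat\psi|^2$, delivering $\|\psi\|_{H^{\beta/2+1}(H^{\beta/2+1})}^2$. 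The main obstacle is the uniform bound on the Dirichlet-to-Neumann symbol across the glancing set $|\tau| = |\xi'|$, where an apparent singularity from $\coth(\mu(L_2 - L_1))$ must cancel against an explicit $\mu$ factor, together with the analogous uniform control of the $2 \times 2$ matrix coupling the two boundaries $y_3 = L_1$ and $y_3 = L_2$; once these symbolic estimates are in place, the remainder of the proof is the algebraic Plancherel splitting above together with the initial-data reduction, and the upper restriction $\beta < 5/2$ enters only to ensure that the trace and compatibility conditions underlying that reduction are well defined.
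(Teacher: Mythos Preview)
The paper does not contain a proof of this lemma: it is stated in Section~\ref{sec03} as one of four auxiliary results that are simply recalled from the literature, with the attribution \cite{RV} (Raymond--Vanninathan) and no argument given. So there is no ``paper's own proof'' to compare your proposal against.

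That said, your sketch is a faithful outline of the Raymond--Vanninathan approach: the flat-slab geometry permits a tangential--temporal Fourier transform, the wave equation reduces to a second-order ODE in the normal variable $y_3$ with explicit Dirichlet-to-Neumann map, and the Plancherel splitting $|\tau|\lessgtr 1+|\xi'|$ produces exactly the two norms $L^2(H^{\beta+2})$ and $H^{\beta/2+1}(H^{\beta/2+1})$ on $\psi$. The steps you flag as the main obstacles---the uniform symbol bound across the glancing set and the control of the $2\times2$ coupling matrix between the two faces---are indeed the substantive points, and your identification of the cancellation between the $\coth(\mu(L_2-L_1))$ singularity and the explicit $\mu$ factor is correct. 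The reduction to zero initial data via a free-wave extension and the time extension/cutoff are standard and cause no trouble at this regularity. Your proposal is a correct reconstruction of the cited result; there is simply nothing in the present paper to set it against.
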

\colb

\startnewsection{Solutions to a linear Stokes-wave system}{sec05}
When solving the nonlinear problem, we are faced with
the following modified linear problem.

Let $\TT \in (0,1/4]$, to be specified below,
and let $\psi_{\TT} (t)$ be a smooth cutoff function with
\begin{equation*}
\psi_{\TT} (t)=
\begin{cases}
1 \inon{on} ~~~[0,\TT]
\\
0 \inon{on} ~~~[2\TT, 1],
\end{cases}
\end{equation*}
such that $\Vert \psi_{\TT}' (t) \Vert_{L^\infty (0,1)} \lec 1/\TT$.
Consider the problem
\begin{align}
&v_{t} -  \Delta v+ \nabla  q = f
\inon{in~$ (0,1)\times\Omegaf    $}
\label{EQ218}
\\&
\div v = g
\inon{in $(0,1)\times\Omegaf     $}
\label{EQ280}
\\&
w_{tt} - \Delta w=0    \inon{in~$ (0,1)\times\Omegae    $}
\label{EQ219}
\end{align}
and
\begin{align}
&
w(t,x) 
= 
w_0(x)
+
\int_0^t
\psi_{\TT}(\tau)
v(\tau,x) \,d\tau
\inon{on  $ [0,1)\times\Gammac    $}
\label{EQ320}
\\&
\frac{\partial v}{\partial N}- q N = \dwdn + h
\inon{on  $[0,1)\times\Gammac    $}
\label{EQ321}
\\&
v =0    \inon{on  $ [0,1)\times\Gammaf    $}.
\label{EQ322}
\end{align}
Note that differentiating in time of \eqref{EQ320} leads to
\begin{align}
w_t(t,x) = \psi_{\TT} (t) v(t,x)
\inon{on $[0,1) \times \Gac$}
,
\label{EQ007}
\end{align}
which is the modified version of the law $w_t=v$ on $\Gammac$.
The next statement provides the existence of solutions for the system \eqref{EQ218}--\eqref{EQ322}.

\cole
\begin{Lemma}
\label{L10} Let $s\in(3/2,3/2+\epsilon_0)$ where $\epsilon_0 \in (0, 1/20)$.
Suppose that the initial data satisfies
\begin{align}
\begin{split}
& 
(v_{0},w_{0},w_{1}) \in H^{s}(\Omegaf) 
\times
H^{s+1/2  }(\Omegae)
\times
H^{s-1/2  }(\Omegae)
\end{split}
\llabel{EQ36}
\end{align}
such that $\div v_0 =0$ 
and with the compatibility conditions \eqref{EQ31}--\eqref{EQ32} and
\begin{align*}
\frac{\partial v_0}{\partial N}
- 
q_0 N 
= 
\frac{\partial w_0}{\partial N} 
+ 
h(0)
\inon{on  $\Gammac    $}.
\end{align*}
Assume that the nonhomogeneous terms satisfy
  \begin{align}   
  \begin{split}
   (f,g,h)
   \in
   K^{s-1}  ((0,1) \times \Omegaf) \times K^{s} ((0,1) \times \Omegaf) \times H^{s/2-1/4,s-1/2} ((0,1) \times \Gammac)
   ,
  \end{split}
   \llabel{EQ37}
  \end{align}
where
  \begin{equation}
    g_{t} = \AA+ \div \BB 
    ,   
   \llabel{EQ76}
  \end{equation}
with $\AA,\BB  \in  K^{s-1}((0,1) \times \Omegaf)$.
There exists a sufficiently small constant
$T_0$ such that if $\TT$ satisfies $\TT\in (0,T_0]$, then the system
\eqref{EQ218}--\eqref{EQ322} admits a unique solution 
  \begin{align}
  \begin{split}
    &v \in  K^{s+1}((0,1) \times \Omegaf) 
     \\&
    q \in  H^{s/2-1/2, s}((0,1) \times \Omegaf)
     \\&
     \nabla q \in  K^{s-1} ((0,1) \times \Omegaf)
     \\&
     q|_{\Gammac} \in  H^{s/2-1/4, s-1/2}((0,1) \times \Gammac)
     \\&
    w \in C([0,1], H^{s/2+5/4  }(\Omegae))
     \\&
    w_{t} \in C([0,1], H^{s/2 +1/4 }(\Omegae))
  \end{split}
   \llabel{EQ38}
   \end{align}
for which 
\begin{align}
\begin{split}
&
\Vert v \Vert_{K^{ s+1}  }
+ 
\Vert q  \Vert_{H^{s/2-1/2, s} }
+ 
\Vert \nabla q  \Vert_{K^{s-1} }
+ \Vert q  \Vert_{H^{s/2-1/4, s-1/2}_{\Gammac}}
\\&\indeq
\leq
\CCC
(\TT^{-1/7}
\Vert v_{0} \Vert_{H^{s}} 
+ \Vert w_{0} \Vert_{H^{s+1/2}} 
+ \Vert w_{1} \Vert_{H^{s-1/2}}
\\&\indeq\indeq
+  \Vert f \Vert_{K^{ s-1}}
+ \Vert g      \Vert_{K^{ s}}
+ \Vert \AA \Vert_{K^{ s-1}} 
+ \Vert \BB  \Vert_{K^{ s-1}} 
+      \Vert h \Vert_{H^{s/2-1/4, s-1/2}_{\Gammac}})
,
\end{split}
\label{EQ90}
\end{align}
where $\CCC>0$ is a constant.
Also, the elastic displacement satisfies
  \begin{align}
  \begin{split}
   &
   \Vert w \Vert_{C([0,1]\scl  H^{s/2+5/4}(\Omegae))} 
   + \Vert w_{t} \Vert_{C([0,1]\scl  H^{s/2+1/4}(\Omegae))}
   \\&\indeq
   \lec_{\TT}
   \Bigl(
     \Vert v_{0} \Vert_{H^{s}} 
     + \Vert w_{0} \Vert_{H^{s+1/2}} 
     + \Vert w_{1} \Vert_{H^{s-1/2}}
  \\&\indeq\indeq\indeq\indeq\indeq\indeq
     +  \Vert f \Vert_{K^{ s-1}}
     + \Vert g      \Vert_{K^{ s}}
     + \Vert \AA \Vert_{K^{ s-1}} 
     + \Vert \BB  \Vert_{K^{ s-1}} 
     +      \Vert h \Vert_{H^{s/2-1/4, s-1/2}_{\Gammac}}
   \Bigr)
   .
  \end{split}
   \label{EQ43}
  \end{align}

\end{Lemma}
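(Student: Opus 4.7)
The plan is to set up a Banach fixed-point scheme that decouples the coupled problem \eqref{EQ218}--\eqref{EQ322} into a Dirichlet wave problem for $w$ and a Neumann--Stokes problem for $(v,q)$, and to close the iteration using the wave solvability Lemmas~\ref{L03} and~\ref{L04} together with the maximal regularity Lemma~\ref{T02}. Given a candidate $\bar v$ in a ball of $K^{s+1}((0,1)\times\Omegaf)$, I would form the Dirichlet datum $\psi = w_0|_{\Gammac} + \int_0^t \psi_{\TT}(\tau)\,\bar v|_{\Gammac}(\tau)\,d\tau$, which is exactly the modified matching condition \eqref{EQ320}, solve \eqref{EQ14}--\eqref{EQ158} to obtain $w$, and then use $h_1 = \dwdn + h$ as the Neumann datum in the Stokes system with the given $f$, $g$, $\AA$, $\BB$, $v_0$. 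Lemma~\ref{T02} then produces a new pair $(v,q)$, and we set $\Lambda(\bar v) = v$.

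For the a~priori bound \eqref{EQ90}, the boundary trace $\dwdn$ must be controlled in $H^{s/2-1/4, s-1/2}_{\Gammac}$. I would apply Lemma~\ref{L04} with $\beta = s-3/2 \in (0,1/2)$ for the spatial component $L^2((0,1), H^{s-1/2}(\Gammac))$ and Lemma~\ref{L03} with $\beta = s/2 + 3/4$ for the temporal component $H^{s/2-1/4}((0,1), L^2(\Gammac))$, and then recover the full mixed norm by interpolation. The hypotheses on $\psi$ in both lemmas are verified using the parabolic trace $\bar v|_{\Gammac}\in K^{s+1/2}_{\Gammac}$, the fact that time integration gains one derivative in time, and Lemmas~\ref{L01} and~\ref{L02} to redistribute regularity between the time and space variables. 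Feeding $h_1$ together with the given data into Lemma~\ref{T02} yields \eqref{EQ90}; the factor $\TT^{-1/7}$ in front of $\Vert v_0 \Vert_{H^s}$ arises because the $v_0$-generated contribution must be measured in an integer-order time norm in order to exploit Poincar\'e on $[0,\TT]$ (which, by Proposition~\ref{P01}, fails at fractional order), and interpolating between this integer norm and the available fractional norm costs a $\TT^{-1/7}$ power. The estimate \eqref{EQ43} for $(w,w_t)$ then follows from a single application of Lemma~\ref{L03} with $\beta = s/2 + 5/4$ once $\psi$ has been constructed from the already-obtained $v$.

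The contraction step exploits the compact support of the cutoff. For any two inputs $\bar v_1, \bar v_2$, the difference $\Lambda(\bar v_1) - \Lambda(\bar v_2)$ solves the same system with all data zero except $\psi_1 - \psi_2 = \int_0^t \psi_{\TT}(\tau)(\bar v_1 - \bar v_2)(\tau)\,d\tau$, which is supported in $[0, 2\TT]$. A Cauchy--Schwarz argument on the support together with Lemma~\ref{L02} bounds every norm of $\psi_1 - \psi_2$ required by Lemmas~\ref{L03}--\ref{L04} by $\TT^\alpha\Vert \bar v_1 - \bar v_2\Vert_{K^{s+1}}$ for some $\alpha > 0$, and propagating through the wave and Stokes solvers gives $\Vert \Lambda(\bar v_1) - \Lambda(\bar v_2)\Vert_{K^{s+1}} \lec \TT^\alpha \Vert \bar v_1 - \bar v_2\Vert_{K^{s+1}}$, which is contractive for $\TT \leq T_0$ with $T_0$ sufficiently small. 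Standard arguments also give Cauchy-ness for the pressure and the wave norms in the same ball, so the limit of the iterates yields the unique solution.

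The main obstacle is the bookkeeping of positive and negative powers of $\TT$ inside fractional norms. Since $\Vert \psi_{\TT}'\Vert_{L^\infty} \lec 1/\TT$, whenever a fractional time derivative lands on $\psi_{\TT}$ through a Leibniz-type splitting one generates inverse powers of $\TT$ that must be absorbed either by the length-$\TT$ support or by an integer-order Poincar\'e, which Proposition~\ref{P01} identifies as the only admissible form. Pairing every use of fractional interpolation or trace with an integer-order reduction so that the net $\TT$-power is nonnegative is the delicate part of the argument; this is what forces the $\TT^{-1/7}$ residual on the $v_0$ term to appear only in front of $v_0$ and nowhere else on the right-hand side of \eqref{EQ90}, and it ultimately constrains how close $s$ may be chosen to $3/2$ in the companion nonlinear estimates of Section~\ref{sec06}.
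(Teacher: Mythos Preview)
Your proposal is correct and follows essentially the same route as the paper: the same fixed-point scheme $\bar v \mapsto w \mapsto (v,q)$, Lemma~\ref{L04} for the spatial part and Lemma~\ref{L03} for the temporal part of $\Vert\dwdn\Vert_{H^{s/2-1/4,s-1/2}_{\Gammac}}$, the same mechanism for the $\TT^{-1/7}$ loss on $v_0$ via interpolation down to integer-order time norms, and the same use of the cutoff support for contraction. One minor slip: you do not ``recover the full mixed norm by interpolation''---the $H^{s/2-1/4,s-1/2}$ norm is by definition the sum of its $L^2_t H^{s-1/2}_x$ and $H^{s/2-1/4}_t L^2_x$ components, so once you have both pieces you are done; the interpolation in the paper (e.g.\ \eqref{EQ599}, \eqref{EQ970}) occurs \emph{inside} each piece, to reduce fractional time regularity to an $H^1_t$--$L^2_t$ pair where the $\psi_{\TT}'$ and Poincar\'e arguments apply.
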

\colb

In \eqref{EQ90}, as well as in the rest of the paper,
we do not indicate the domains $(0,1) \times \Omegaf$ or $(0,1) \times \Omegae$ in the norms as they are understood from the context,
However, we always use a complete notation for norms involving the boundary trace.

\begin{Remark}
\label{R02}
\rm
In Lemma~\ref{L10}, the range of $\epsilon_0$ can be relaxed to $(0, \sqrt{2} - 1)$.
Indeed, the only two modifications needed are the power $1/6$
of~$\TT$ in \eqref{EQ365},
which can be replaced by an arbitrarily small $\epsilon>0$, 
and the power $-1/7$ of~$\TT$ in \eqref{EQ201}, which can also be replaced
with a smaller number $-2/5$.
The estimates \eqref{EQ90} and \eqref{EQ43} then still hold by replacing the power $-1/7$ of $\TT$ in \eqref{EQ90} by $-2/5$ and taking
the implicit constants sufficiently large.
\end{Remark}

\begin{proof}[Proof of Lemma~\ref{L10}]
In order to apply the fixed point argument, we consider the inequality
\begin{align}
	\begin{split}
	\Vert v\Vert_{K^{s+1}}
	&
	\leq
	\CCC
	(
	\TT^{-1/7}
	\Vert v_{0} \Vert_{H^{s}} 
	+ \Vert w_{0} \Vert_{H^{s+1/2}} 
	+ \Vert w_{1} \Vert_{H^{s-1/2}}
	\\&\indeq\indeq
	+  \Vert f \Vert_{K^{ s-1}}
	+ \Vert g      \Vert_{K^{ s}}
	+ \Vert \AA \Vert_{K^{ s-1}} 
	+ \Vert \BB  \Vert_{K^{ s-1}} 
	+      \Vert h \Vert_{H^{s/2-1/4, s-1/2}_{\Gammac}}
	)
	,
	\label{EQ500}
\end{split}
\end{align}
where $\CCC\geq 1$ is a sufficiently large constant to be determined below.
Denote
\begin{align}
\begin{split}
\mathcal{Z}
&
= 
\Bigl\{v \in K^{s+1} ((0,1) \times \Omegaf):
v(0) = v_0 ~~\text{in}~~\Omegaf, 
v=0 ~~\text{on}~~ (0,1) \times\Gammaf, 
\\&\indeq\indeq\indeq
v~~\text{periodic in the $y_1$~and~$y_2$ directions},
~~\text{and}~~\text{\eqref{EQ500}~holds}\Bigr\}
.
\label{EQ388}
\end{split}
\end{align}
\colb
For $v\in \mathcal{Z}$,
we write
  \begin{equation}
   \tLambda v = \bar w
   ,
   \label{EQ27}
  \end{equation}
where $\bar w$
solves the wave equation
 \begin{align}
\begin{split}
\bar{w}_{tt} - \Delta \bar{w}
=
0    \inon{in $(0,1)\times\Omegae   $}
,
\end{split}
\label{EQ385}
\end{align}
with the boundary and initial conditions
\begin{align}
&
\bar{w}(t, x)
= w_0 (x)
+ 
\int_{0}^{t} \psi_{\TT} (\tau) v (\tau, x)
\,d\tau
\inon{on $(0,1)\times\Gamma_c$}
\label{EQ387}
\\&
\text{$\bar{w}$~periodic~in~the~$y_1$~and~$y_2$ ~directions}
\label{EQ488}
\\&
(\bar{w},\bar{w}_t)(0)=(w_0,w_1)
\inon{in~$\Omegae$}
\label{EQ489}.
\end{align}
Then, we define a mapping
\begin{align}
\Lambda \colon v (\in \mathcal{Z} )\mapsto \bar{v}
,
\llabel{EQ152}
\end{align}
where $(\bar{v}, \bar{q})$ is the solution of the linear Stokes equations
\begin{align}
&\bar{v}_t 
-  
\Delta \bar{v}
+ 
\nabla  \bar{q} = f
\inon{in $(0,1)\times \Omegaf$}
\label{EQ83}
\\&
\div \bar{v} 
= 
g
   \inon{in $(0,1)\times \Omegaf$}
\label{EQ84}
\end{align}
with the boundary and initial conditions 
\begin{align}
&
\frac{\partial \bar{v}}{\partial N}
- 
\bar{q} N 
= 
\frac{\partial \bar{w}}{\partial N} 
+ 
h
\inon{on  $(0,1)\times\Gammac $}
\label{EQ86}
\\&
\bar{v}
=
0    \inon{on  $ (0,T)\times\Gammaf    $}
\\
&
\text{$\bar{v}$~periodic~in~the~$y_1$~and~$y_2$ ~directions}
\label{EQ190}
\\&
\bar{v} (0)=v_0 
\inon{in~$\Omegaf$}.
\label{EQ191}
\end{align}
We shall prove that the mapping $\Lambda$ is well-defined from $\mathcal{Z}$ to $\mathcal{Z}$,
for some sufficiently large constant $\bar{C}\geq 1$ in \eqref{EQ500} and sufficiently small constant $\TT\in (0,1/4]$.

From Lemma~\ref{T02}, the system \eqref{EQ83}--\eqref{EQ84} with the boundary and initial conditions \eqref{EQ86}--\eqref{EQ191} admits a unique solution $(\bar v,\bar q)$ which satisfies
\begin{align}
\begin{split}
&
\Vert \bar v \Vert_{K^{ s+1}}
+ 
\Vert \bar q \Vert_{H^{s/2-1/2, s}}
+ 
\Vert \nabla\bar q \Vert_{K^{s-1}}
+ 
\Vert \bar{q}  \Vert_{H^{s/2-1/4, s-1/2}_{\Gammac}}
\\&
\lec 
\Vert  v_{0} \Vert_{H^{s}} 
+  \Vert f \Vert_{K^{ s-1}}
+ \Vert g \Vert_{K^{ s}}
+ \Vert \AA \Vert_{K^{ s-1}} 
+ \Vert \BB  \Vert_{K^{ s-1}} 
+ 
\left\Vert h
\right\Vert_{H^{s/2-1/4, s-1/2}_{\Gammac}}
+ 
\left\Vert
\dwbardn 
\right\Vert_{H^{s/2-1/4, s-1/2}_{\Gammac}}
.
\end{split}
\label{EQ29}
\end{align}
It remains to estimate the last term on the right side of \eqref{EQ29}.
For the space part of the norm, we invoke the trace regularity result Lemma~\ref{L04}, obtaining
  \begin{align}
\begin{split}
\left\Vert \frac{\partial \bar w}{\partial N} \right
\Vert_{L_t^2  H_x^{s-1/2} (\Gammac)} 
&
\lec
\Vert w_{0} \Vert_{H^{s+1/2} } 
+ 
\Vert
w_{1} \Vert_{H^{s-1/2} } 
+  
\Vert  \bar w  \Vert_{L_t^2  H_x^{s+1/2}(\Gammac)}
+ 
\Vert  \bar w \Vert_{H_t^{s/2+1/4}  H_x^{s/2+1/4} (\Gammac)}
\\    &
\lec
\Vert w_{0} \Vert_{H^{s+1/2}} 
+ 
\Vert
w_{1} \Vert_{H^{s-1/2} } 
+ 
\Vert  \bar{w} \Vert_{L^2_t  H_x^{s+1/2} (\Gammac)}
+  
\Vert  \bar{w}_t  \Vert_{H_t^{s/2-3/4}  H_x^{s/2+1/4}(\Gammac)}
.
\end{split}
\label{EQ28}
\end{align}
For the third term on the far right side of \eqref{EQ28}, we appeal to the boundary condition \eqref{EQ387} and the trace inequality to get
\begin{align}
\begin{split}
&
\Vert  \bar w \Vert_{L^2_t H_x^{s+1/2}(\Gammac)}^2
\lec
\left\Vert
\int_{0}^{1} \psi_{\TT} v \, d\tau
\right\Vert_{H^{s+1/2}(\Gammac)}^2
\lec
\TT
\int_{0}^{2\TT} \Vert v  \Vert^{2}_{H^{s+1}}\,d\tau
\lec
\TT \Vert v\Vert_{K^{s+1}}^2
,
\end{split}
\label{EQ39}
\end{align}
since $w_0=0$.
For the last term on the far right side of \eqref{EQ28}, we deduce that
\begin{align}
\begin{split}
\Vert \bar{w}_t\Vert_{H^{s/2-3/4}_t H^{s/2+1/4}_x (\Gammac)}
&\lec
{\Vert \bar{w}_t\Vert_{H^{1}_t H^{s-3/2}_x (\Gammac)}^{s/2-3/4}}
{\Vert \bar{w}_t\Vert_{L^2_t H_x^{(-4s^2+16s-7)/2(7-2s)} (\Gammac)}^{7/4-s/2}}
              \\&
      = \mathcal{I}_1^{s/2-3/4}
        \mathcal{I}_2^{7/4-s/2}
,
\label{EQ599}
\end{split}
\end{align}
which follows from the Sobolev interpolation. 
The factor $\mathcal{I}_1$ is estimated using \eqref{EQ007} and the trace inequality as
\begin{align}
\begin{split}
\mathcal{I}_1
&
\lec
\Vert \psi_{\TT}' v\Vert_{L^2_t H^{s-1}_x}
+
\Vert \psi_{\TT} v_t \Vert_{L^2_t H^{s-1}_x}
+
\Vert \psi_{\TT} v \Vert_{L^2_t H^{s-1}_x}
\\&
\lec
\TT^{-1}
\Vert v\Vert_{L^2_t H^{s-1}_x ((0,2\TT)\times \Omegaf)}
+
\Vert v\Vert_{K^{s+1}}
\lec
\TT^{-s/(s+1)} \Vert v\Vert_{K^{s+1}}
;
\label{EQ351}
\end{split}
\end{align}
in the last step of \eqref{EQ351}, we have used
\begin{align*}
\begin{split}
\Vert v\Vert_{L^2_t H^{s-1}_x ((0,2\TT) \times \Omegaf)}^2
&
\lec
\int_0^{2\TT}
\Vert v\Vert_{H^{s-1}}^2 \, d\tau
\lec
\int_0^{2\TT}
\Vert v\Vert_{L^{2}}^{4/(s+1)}
\Vert
v\Vert_{H^{s+1}}^{2(s-1)/(s+1)}
 \, d\tau
 \\&
 \lec
\Vert v\Vert_{L^\infty_t L_x^{2}}^{4/(s+1)}
\TT^{2/(s+1)}
\Vert v\Vert_{L^2_t H^{s+1}_x}^{2(s-1)/(s+1)}
\lec
\TT^{2/(s+1)}
\Vert v\Vert_{K^{s+1}}^2 
,
\end{split}
\end{align*}
which follows from Lemma~\ref{L02} and H\"older's inequality.
Similarly, we bound the term $\mathcal{I}_2$ as
\begin{align}
\begin{split}
\mathcal{I}_2
&
\lec
\Vert \psi_{\TT} 
v\Vert_{L^2_t H_x^{s} }
\lec
\TT^{1/2(s+1)}
\Vert v\Vert_{K^{s+1}}
.
\end{split}
\label{EQ153}
\end{align}
since $(-4s^2+16s-7)/(14-4s)+1/2=s$.
From \eqref{EQ599}--\eqref{EQ153} we infer that
\begin{align}
\begin{split}
\Vert \bar{w}_t\Vert_{H^{s/2-3/4}_t H^{s/2+1/4}_x (\Gammac)}
&
\lec
\TT^{(-4s^2+4s+7)/8(s+1)}
\Vert v\Vert_{K^{s+1}}
\lec
\TT^{1/6} \Vert v\Vert_{K^{s+1}}
,
\label{EQ365}
\end{split}
\end{align}
since $s\in (3/2, 31/20)$.
Inserting \eqref{EQ39} and \eqref{EQ365} to \eqref{EQ28}, we conclude that
\begin{align}
\begin{split} 
\left\Vert \frac{\partial \bar w}{\partial N} \right\Vert_{L^2_t H^{s-1/2}_x (\Gammac)}
&
\lec
\Vert w_{0} \Vert_{H^{s+1/2}} 
+ 
\Vert w_{1} \Vert_{H^{s-1/2}} 
+     
\TT^{1/6}
\Vert v\Vert_{K^{s+1}}
.
\end{split}
\label{EQ41}
\end{align}             
For the time part of the last term on the far right side of \eqref{EQ29}, we use the hidden regularity Lemma~\ref{L03} to obtain
\begin{align}
\begin{split}
\left\Vert \frac{\partial \bar{w}}{\partial N}\right\Vert_{H^{s/2-1/4}_t L_x^2(\Gammac)}
&
\lec
\Vert w_0\Vert_{H^{s/2+3/4}}
+
\Vert w_1\Vert_{H^{s/2-1/4} }
+
\Vert \bar{w}\Vert_{H^{s/2+3/4}_t L^2_x (\Gammac)}
+
\Vert \bar{w}\Vert_{ L^2_t H^{s/2+3/4}_x (\Gammac)}
.
\label{EQ353}
\end{split}
\end{align}
The third term on the right side is estimated as
\begin{align}
\begin{split}
\Vert \bar{w}\Vert_{H^{s/2+3/4}_t L^2_x (\Gammac)}
&
\lec
\Vert \bar{w}_t\Vert_{H^{s/2-1/4}_t L^2_x (\Gammac)}
+
\Vert \bar{w}\Vert_{L^{2}_t L^2_x (\Gammac)}
\\&
\lec
\Vert \bar{w}_t\Vert_{H^{1}_t L^2_x (\Gammac)}^{s/2-1/4}
\Vert  \bar{w}_t\Vert_{L^2_t L^2_x (\Gammac)}^{5/4-s/2}
+
\Vert \bar{w} \Vert_{L^2_t H^{s+1/2}_x (\Gammac)}
\\&
=
{\mathcal{I}_3}^{s/2-1/4}
{\mathcal{I}_4}^{5/4-s/2}
+
\Vert \bar{w} \Vert_{L^2_t H^{s+1/2}_x (\Gammac)}
,
\label{EQ970}
\end{split}
\end{align}
where we used the Sobolev interpolation.
The term $\mathcal{I}_3$ is bounded using the trace inequality as
\begin{align}
\begin{split}
\mathcal{I}_3
&
\lec
\Vert \psi_{\TT}' v \Vert_{L^2_t L^2_x (\Gammac)}
+
\Vert \psi_{\TT} v_t \Vert_{L^2_t L^2_x (\Gammac)}
+
\Vert \psi_{\TT} v \Vert_{L^2_t L^2_x (\Gammac)}
\\&
\lec
\Vert \psi'_{\TT} (v-v_0) \Vert_{L^2_t H^{s-1}_x ( (0,2\TT)\times \Omegaf)}
+
\Vert \psi'_{\TT} v_0 \Vert_{L^2_t H^{s-1}_x ( (0,2\TT)\times \Omegaf)}
+
\Vert v \Vert_{K^{s+1}}
\\&
\lec
\TT^{-1}
\Vert v-v_0 \Vert_{L^2_t H^{s-1}_x ( (0,2\TT)\times \Omegaf)}
+
\TT^{-1/2}
\Vert v_0 \Vert_{H^s}
+
\Vert v \Vert_{K^{s+1}}
,
\end{split}
\label{EQ78}
\end{align}
since $s>3/2$.
An application of the fundamental theorem of calculus leads to
\begin{align}
\begin{split}
\Vert v-v_0\Vert_{L^2_t H^{s-1}_x ( (0,2\TT)\times \Omegaf)}
&\lec
\left\Vert \int_0^t v_t \,d\tau 
\right\Vert_{L^2_t H^{s-1}_x ( (0,2\TT)\times \Omegaf)}
\lec
\TT \Vert v\Vert_{K^{s+1}}
,
\llabel{EQ976}
\end{split}
\end{align}
where we also used the Cauchy-Schwarz inequality.
For the term $\mathcal{I}_4$, we proceed analogously to~\eqref{EQ153} to get
\begin{align}
\begin{split}
\mathcal{I}_4
&
\lec
\TT^{1/(s+1)}
\Vert v\Vert_{K^{s+1}}
.
\label{EQ971}
\end{split}
\end{align}
Inserting \eqref{EQ39} and \eqref{EQ78}--\eqref{EQ971} to \eqref{EQ970} and using the Young's inequality, we arrive at
\begin{align}
\begin{split}
&
\Vert \bar{w}\Vert_{H^{s/2+3/4}_t L^2_x (\Gammac)}
\lec
(\Vert v\Vert_{K^{s+1}} 
+
\TT^{-1/2} \Vert v_0\Vert_{H^s}
)^{(2s-1)/4}
( \TT^{1/(s+1)} \Vert v\Vert_{K^{s+1}}
)^{(5-2s)/4}
+
\TT^{1/2}
\Vert v\Vert_{K^{s+1}}
\\&\indeq\indeq
\lec
(\epsilon + \TT^{(5-2s)/4(s+1)} )
\Vert v\Vert_{K^{s+1}}
+
C_\epsilon 
\TT^{(-2s^2 -5s+11)/2(s+1)(2s-1)}
\Vert v_0\Vert_{H^s}
+
\TT^{1/2}
\Vert v\Vert_{K^{s+1}}
\\&\indeq\indeq
\lec
(\epsilon + \TT^{1/6} )
\Vert v\Vert_{K^{s+1}}
+
C_\epsilon 
\TT^{-1/7}
\Vert v_0\Vert_{H^s}
 ,
 \label{EQ201}
\end{split}
\end{align}
for any $\epsilon \in (0,1]$. 
Note that last inequality follows since $\epsilon_0 \in (0, 1/20)$.
From \eqref{EQ39}, \eqref{EQ353}, and \eqref{EQ201} we arrive at
\begin{align}
\left\Vert \frac{\partial \bar{w}}{\partial N}\right\Vert_{H^{s/2-1/4}_t L_x^2(\Gammac)}
\lec
\Vert w_0\Vert_{H^{s+1/2}}
+
\Vert w_1\Vert_{H^{s-1/2} }
+
(\epsilon + \TT^{1/6})
\Vert v\Vert_{K^{s+1}}
+
C_{\epsilon}
\TT^{-1/7}
 \Vert v_0\Vert_{H^s}
,
\label{EQ973}
\end{align}
for any $\epsilon\in (0,1]$.

Finally, inserting \eqref{EQ41} and \eqref{EQ973} to \eqref{EQ29}, we obtain
\begin{align}
\begin{split}
& 
\Vert \bar v \Vert_{K^{ s+1}}
+ 
\Vert \bar q \Vert_{H^{s/2-1/2, s}}
+ 
\Vert \nabla \bar q \Vert_{K^{s-1}}
+ 
\Vert \bar q  \Vert_{H^{s/2-1/4, s-1/2}_{\Gammac}}
\\&\indeq 
\leq
C (\epsilon 
+ 
\TT^{1/6}) 
\Vert v\Vert_{K^{s+1}}
+
C_\epsilon
\TT^{-1/7}
\Vert
v_{0} \Vert_{H^{s}} 
+ 
C
(\Vert w_{0} \Vert_{H^{s+1/2}} 
+ 
\Vert w_{1} \Vert_{H^{s-1/2}}
\\&\indeq\indeq
+  
\Vert f \Vert_{K^{ s-1}}
+ 
\Vert g \Vert_{K^{ s}}
+ 
\Vert \AA \Vert_{K^{ s-1}} 
+ 
\Vert \BB  \Vert_{K^{ s-1}} 
+ 
\Vert h \Vert_{H^{s/2-1/4, s-1/2}_{\Gammac}} )
,
\end{split}
\label{EQ30}
\end{align}
for any $\epsilon\in (0,1]$.
Let $T_0 = (1/4C)^6$, where $C$ is the constant in \eqref{EQ30}.
Then setting $\TT\in(0,T_0]$ and $\epsilon = 1/4C$, we obtain
\eqref{EQ500} for a sufficiently large constant $\CCC\geq 1$ depending on $C$.
Therefore, we have shown that the mapping $\Lambda\colon v\mapsto \bar{v}$
is well-defined from $\mathcal{Z}$ to $\mathcal{Z}$.

\colb
Let $v_1, v_2 \in \mathcal{Z}$ and $(\bar{w}_1, \bar{v}_1, \bar{q}_1)$ and $(\bar{w}_2, \bar{v}_2, \bar{q}_2)$ be the corresponding solutions of \eqref{EQ385}--\eqref{EQ488} and \eqref{EQ83}--\eqref{EQ190}, with the same initial data $(v_0, w_0, w_1)$ and the same nonhomogeneous terms $(f,g,\tilde{g}, b, h)$.
We proceed analogously as in \eqref{EQ29}--\eqref{EQ30} with
$(\bar{v}_1 - \bar{v}_2, \bar{q}_1 - \bar{q}_2)$ to obtain
  \begin{align}
  \begin{split}
  & 
  \Vert \bar v_2-\bar v_1 \Vert_{K^{ s+1}}
  + 
  \Vert \bar q_2-\bar q_1 \Vert_{H^{s/2-1/2, s}}
  + 
  \Vert \nabla (\bar q_2 - \bar q_1 )\Vert_{K^{s-1}}
  + 
  \Vert \bar q_2-\bar q_1  \Vert_{H^{s/2-1/4, s-1/2}_{\Gammac}}
  \\&\indeq 
  \leq
  \frac12
  \Vert v_2-v_1\Vert_{K^{s+1}}
  \end{split}
  \llabel{EQ26}
  \end{align}
and this, in particular, implies that
  \begin{align}
  \begin{split}
   &
  \Vert\Lambda( v_2- v_1) \Vert_{K^{ s+1}}
  \leq
  \frac12
  \Vert v_2-v_1\Vert_{K^{s+1}}.
  \end{split}
   \label{EQ52}
  \end{align}

On the other hand, using the hidden regularity Lemma~\ref{L03}, we have the interior estimate
\begin{align}
\begin{split}
&
\Vert \bar{w} \Vert_{C([0, 1], H^{s/2+5/4} (\Omegae))}
+
\Vert \bar{w}_t \Vert_{C([0, 1], H^{s/2+1/4}(\Omegae))}
\\&\indeq
\lec
\Vert w_0\Vert_{H^{s+1/2} (\Omegae)}
+
\Vert w_1\Vert_{H^{s-1/2} (\Omegae)}
+
\Vert \bar{w}\Vert_{H^{s/2+5/4, s/2+5/4} (\Gammac)}
,
\label{EQ380}
\end{split}
\end{align}
where we used $s>3/2$ in the last inequality.
For the last term on the right side of \eqref{EQ380}, we appeal to Lemma~\ref{L01} to get
  \begin{align}
  \begin{split}
  \Vert \bar{w}\Vert_{H^{s/2+5/4, s/2+5/4 } ( \Gammac)}
  &
  \lec
  \Vert \bar{w}_t \Vert_{H^{s/2+1/4}_t L^2_x (\Gammac) }
  + \Vert \bar{w} \Vert_{L^2_t L^2_x (\Gammac) }
  +
  \Vert \bar{w} \Vert_{L^2_t H^{s/2+5/4 }_x ( \Gammac)}
  \\&
  \lec_{\TT}
  \Vert v\Vert_{K^{s+1}}
  +
  \Vert v\Vert_{L^2_t H^{s+1}_x}
  \lec_{\TT}
  \Vert v\Vert_{K^{s+1}}
  .
  \end{split}
  \label{EQ42}
  \end{align}
Applying \eqref{EQ380}--\eqref{EQ42} for the difference $\bar{w}_1 - \bar{w}_2 = \tLambda v_2 - \tLambda v_1$, we get 
  \begin{align}
  \begin{split}
%
  \Vert \tLambda (v_2-v_1) \Vert_{C([0, 1], H^{s/2+5/4} (\Omegae))}
  +
  \Vert \partial_{t} (\tLambda (v_2-v_1)) \Vert_{C([0, 1], H^{s/2+1/4}(\Omegae))}
  &
  \lec_{\TT}
  \Vert v_2-v_1\Vert_{K^{s+1}}
  .
  \end{split}
   \label{EQ445}
  \end{align}
\colb
Now, to pass to the limit, we form the sequence
  \begin{align}
  \begin{split}
   v^{(n+1)} = \Lambda v^{(n)}
   \and
   w^{(n+1)} = \tLambda v^{(n)}
    \comma n\in\mathbb{N}_0
  \end{split}
   \label{EQ449}
  \end{align}
with
  \begin{align}
  \begin{split}
   v^{(0)} = v_0.
  \end{split}
   \label{EQ49}
  \end{align}
Then, passing to the limit, we obtain that there
exists a unique solution
\begin{align*}
\begin{split}
(v,q,w, w_{t} ) 
&
\in K^{s+1} ((0, 1)\times \Omegaf)
\times     H^{s/2-1/2, s}((0, 1)\times \Omegaf) \\&\indeq\indeq
\times
C([0, 1],H^{s/2+5/4}(\Omegae)) \times
C([0, 1], H^{s/2+1/4}(\Omegae) )   
,
\end{split}
\end{align*}
to the system \eqref{EQ218}--\eqref{EQ322}.
\end{proof}

\colb
\startnewsection{Solution to the fluid-structure system}{sec06}
In this section, we provide the local existence for the coupled Stokes-wave system \eqref{EQ01}--\eqref{EQ04} with the boundary conditions \eqref{EQ07}--\eqref{EQ08} and the initial data \eqref{EQ10}
by again suitably modifying it
to avoid issues with dependence of constants for small time.

Let $\psi_{\TT} (t)$ be a smooth cutoff function as in Section~\ref{sec05}, where $\TT\in (0,1/4]$ is a constant to be determined.
Instead of \eqref{EQ01}--\eqref{EQ08}, we consider the system
  \begin{align}
  &\partial_{t}v_{k} 
  -  a_{jl} \partial_{j}( a_{ml}
  \partial_{m} v_{k}) + a_{jk}\partial_{j}  q = 0
  \inon{in~$(0,1)\times\Omegaf $}
  \label{EQ01a}
  \\&
  a_{ji}\partial_{j}v_{i}=0
  \inon{in~$(0,1)\times\Omegaf    $}
  .
  \label{EQ02a}
  \end{align}
Here,
  \begin{align}
  \eta (t,x)
  =
  x
  +
  \int_{0}^{t} \psi_{\TT} (\tau) v (\tau, x) d\tau
  \inon{in~$[0,1]\times \Omegaf$}
  \label{EQ211}
  \end{align}
is a modified Lagrangian flow map and
$a=\cof(\nabla \eta)$ its cofactor matrix, i.e.,
  \begin{equation}
  a \nabla \eta 
  = 
  \nabla \eta a
  = 
  \det (\nabla \eta) 
  \mathbb{I}_3
  \inon{in~$[0,1]\times \Omegaf$}
  .
  \label{EQ100}
  \end{equation}
The displacement satisfies
\begin{equation}
w_{tt}
- \Delta w
= 0
\inon{in~$(0,1)\times\Omegae $}
,
\label{EQ04a}
\end{equation}
while the 
interaction boundary conditions on
$[0,1)\times\Gammac$ read as
\begin{align}
&
w(t,x) 
= 
w_0(x)
+
\int_0^t
\psi_{\TT}(\tau)
v(\tau,x) \,d\tau
\inon{on  $ [0,1)\times\Gammac    $},
\label{EQ320a}
\\&
\frac{\partial w_k}{\partial N} = a_{jl} a_{ml} \partial_{m} v_{k} N^{j} - a_{jk} q N^{j}
\inon{on  $[0,1)\times\Gammac    $}
    \comma k=1,2,3,
\label{EQ335}
\\&
v =0    \inon{on  $ [0,1)\times\Gammaf    $}.
\label{EQ322a}
\end{align}
In addition, we also have the side boundary conditions
  \begin{align}
  v(t,\cdot),
  q(t,\cdot),
  w(t,\cdot)
  ~~\text{periodic~in~the~$y_1$~and~$y_2$~directions}
  \label{EQ08a}
\end{align}
and the initial conditions
  \begin{align}
  \begin{split}
  & (v(0,\cdot), w(0,\cdot), w_{t}(0,\cdot))
  = (v_{0},w_0,w_1)
  \inon{in~ $ \Omegaf\times\Omegae\times\Omegae$}
  .
  \label{EQ10a}
  \end{split}
  \end{align}

In order to apply the fixed point argument, we
consider the inequality
\begin{align}
\begin{split}
&
\Vert v \Vert_{K^{ s+1} ((0,1) \times \Omegaf) }
\leq
   C
\bigl(
1 +   \Vert  v_{0} \Vert_{H^{s}}^{7}
+ 
\Vert w_{0} \Vert_{H^{s+1/2}} 
+ 
\Vert w_{1} \Vert_{H^{s-1/2}}
\bigr) 
,
\end{split}
   \label{EQ81}
\end{align}
where $C\geq1$ is a sufficiently large constant to be determined below.
Denote
\begin{align}
\begin{split}
\mathcal{Z}
&
= 
\Bigl\{v \in K^{s+1} ((0,1) \times \Omegaf):
v(0) = v_0 ~~\text{in}~~\Omegaf, 
v=0 ~~\text{on}~~ (0,1) \times\Gammaf, 
\\&\indeq\indeq\indeq
v~~\text{periodic in the $y_1$~and~$y_2$ directions},
~~\text{and \eqref{EQ81}~holds}\Bigr\}
.
\label{EQ389}
\end{split}
\end{align}
Let $v\in \mathcal{Z}$, we first obtain $\eta$ and $a$ using \eqref{EQ211} and \eqref{EQ100}. Then, we solve the wave equation \eqref{EQ04a} for $\tPi v=\bar{w}$ with the boundary and initial conditions \eqref{EQ387}--\eqref{EQ489}.
Next we define
\begin{align*}
\Pi \colon
v\in \mathcal{Z} \to \bar{v},
\end{align*}
where $(\bar{v}, \bar{q})$ is the solution to the linear Stokes equations \eqref{EQ83}--\eqref{EQ84} with the boundary and initial conditions \eqref{EQ86}--\eqref{EQ191}, where
\begin{align}
\begin{split}
&f_{k} 
=   
\partial_{j} 
( ( a_{jl} a_{ml} -
\delta_{jm}
) \partial_{m} \bar{v}_{k}  
)
-  
\partial_{j}(( a_{jk}  - \delta_{jk} ) \bar{q})
\\&
h_{k} 
=   
-  ( a_{jl} a_{ml} 
- \delta_{jm}
) \partial_{m}\bar{v}_{k} N^{j}
+
( a_{jk}  - \delta_{jk} ) \bar{q} N^{j}
\\
&g  
=  
-\partial_{j}  ( (a_{ji} - \delta_{ji} )\bar{v}_{i} ),
\label{EQ44}
\end{split}
\end{align}
for $k=1,2,3$.
Note that a fixed point of the system, if it exists, formally agrees with a solution to our original system \eqref{EQ01}--\eqref{EQ10} on $(0,\TT)$
due to \eqref{EQ007} and the Piola identity
\begin{equation}
\partial_{i} a_{ij} 
= 
0
,
\label{EQ79}
\end{equation}
for $j=1,2,3$.

\subsection{The Lagrangian map and cofactor matrix bounds}
Before we bound the terms in \eqref{EQ44}, we provide the bounds on the Lagrangian flow map and its cofactor matrix.

\cole
\begin{Lemma}
\label{L05}
Let $v \in K^{s+1} ((0,1) \times \Omegaf)$ and 
$\Vert v \Vert_{K^{s+1}} \leq M$ for some $M\geq 1$.
Let $\eta$ and $a$ be the Lagrangian flow map and its cofactor matrix, defined as in \eqref{EQ211}--\eqref{EQ100}.
Then for $\TT\in (0, 1/ M^6 ]$, we have
\begin{enumerate}[label=(\roman*)]
\item 
$
   \Vert \nabla \eta -\mathbb{I}_3 \Vert_{L^{\infty}_t H_x^{s}} 
   +
   \Vert a -\mathbb{I}_3 \Vert_{L^{\infty}_t H_x^{s}} 
   +
   \Vert aa^T -\mathbb{I}_3 \Vert_{L^{\infty}_t H_x^{s}} 
   \lec 
   \TT^{1/3}
$,
\item 
$
\Vert a - \mathbb{I}_3 \Vert_{H^{s-1}_t H_x^{s}}
+   
\Vert a a^T- \mathbb{I}_3 \Vert_{H^{s-1}_t H_x^{s}}
\lec 
\TT^{(5-3s)/6}
$,
\item  
$
\Vert a_t \bar{v} \Vert_{H^{(s-1)/2}_t L_x^{2}} 
\lec  
\TT^{(5-3s)/6} 
\Vert \bar{v}\Vert_{K^{s+1}}
$,
for any $\bar{v} \in K^{s+1}$.
\end{enumerate}
We emphasize that the implicit constants are independent of $M$ and $\TT$.
\end{Lemma}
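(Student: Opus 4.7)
The plan is to derive all three bounds from the integral representation $\nabla\eta(t) - \mathbb{I}_3 = \int_0^t \psi_\TT(\tau)\nabla v(\tau)\,d\tau$, combined with the algebra structure of $H^s_x$ (for $s>3/2$), space-time interpolation, and the standing relation $M \lec \TT^{-1/6}$ equivalent to $\TT M^6 \le 1$. For~(i), Cauchy--Schwarz in time with $\Vert\psi_\TT\Vert_{L^2_t} \lec \TT^{1/2}$ gives
\begin{equation*}
\Vert \nabla\eta - \mathbb{I}_3\Vert_{L^\infty_t H^s_x} \lec \TT^{1/2}\Vert \nabla v\Vert_{L^2_t H^s_x} \lec \TT^{1/2}M \lec \TT^{1/3}.
\end{equation*}
Since $\cof(\mathbb{I}_3 + X) - \mathbb{I}_3$ and $\cof(\mathbb{I}_3+X)\cof(\mathbb{I}_3+X)^T - \mathbb{I}_3$ are polynomials in the entries of $X$ without constant term, the Banach algebra property of $H^s_x$ propagates the bound to $a - \mathbb{I}_3$ and $aa^T - \mathbb{I}_3$, yielding~(i).

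For~(ii), differentiating gives $\partial_t(\nabla\eta - \mathbb{I}_3) = \psi_\TT\nabla v$, supported in $[0,2\TT]$, hence $\Vert\partial_t(\nabla\eta - \mathbb{I}_3)\Vert_{L^2_t H^s_x} \lec M \lec \TT^{-1/6}$. Combined with the bound $\Vert \nabla\eta - \mathbb{I}_3\Vert_{L^2_t H^s_x} \lec \TT^{1/3}$ from~(i), interpolation between $L^2_t H^s_x$ and $H^1_t H^s_x$ in the time variable with weight $s - 1 \in (1/2, 11/20)$ yields
\begin{equation*}
\Vert \nabla\eta - \mathbb{I}_3\Vert_{H^{s-1}_t H^s_x} \lec (\TT^{1/3})^{2-s}(\TT^{-1/6})^{s-1} = \TT^{(5-3s)/6}.
\end{equation*}
A Moser-type product estimate (using the algebra of $H^s_x$ together with the Sobolev--Slobodeckij product rule in time) shows that $L^\infty_t H^s_x \cap H^{s-1}_t H^s_x$ is stable under multiplication, transferring the bound to $a - \mathbb{I}_3$ and $aa^T - \mathbb{I}_3$.

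For~(iii), write $a_t = Q(\nabla\eta)\psi_\TT\nabla v$ with $Q$ polynomial. Using $\nabla v \in L^\infty_t L^2_x$ (via interpolation from $K^{s+1}$) and $\bar v \in L^\infty_t L^\infty_x$ (by $s>3/2$), the $L^2_t L^2_x$ piece is immediate:
\begin{equation*}
\Vert a_t\bar v\Vert_{L^2_t L^2_x} \lec \Vert \nabla v\Vert_{L^2([0,2\TT];L^2_x)}\Vert \bar v\Vert_{L^\infty_t L^\infty_x} \lec \TT^{1/2}M\Vert \bar v\Vert_{K^{s+1}} \lec \TT^{1/3}\Vert \bar v\Vert_{K^{s+1}}.
\end{equation*}
For the $H^{(s-1)/2}_t$ seminorm, the decisive input is $\Vert\psi_\TT\Vert_{H^{(s-1)/2}_t} \lec \TT^{(2-s)/2}$, obtained by interpolating $\Vert\psi_\TT\Vert_{L^2_t} \lec \TT^{1/2}$ against $\Vert\psi_\TT\Vert_{H^1_t} \lec \TT^{-1/2}$. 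A Moser-type product estimate in time pairs this fractional norm of the cutoff with the $L^\infty_t L^2_x$ bound $\lec \TT^{-1/6}\Vert\bar v\Vert_{K^{s+1}}$ on the remaining factor $Q(\nabla\eta)(\nabla v)\bar v$, producing exactly the critical exponent $\TT^{(2-s)/2 - 1/6} = \TT^{(5-3s)/6}$.

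The main obstacle is~(iii), where the Moser splits must be organized carefully: since $\nabla v$ does not belong to $L^\infty_t L^\infty_x$, the $H^{(s-1)/2}_t$ weight can only be placed on factors that are either uniformly bounded in $L^\infty_t L^\infty_x$ (such as $\psi_\TT$ or $Q(\nabla\eta)$ by~(i)) or that carry genuine fractional time regularity inherited from $v, \bar v \in K^{s+1}$ via Lemma~\ref{L02}. Item~(ii) is an essential prerequisite for handling the various cross-terms, since it certifies the small $H^{s-1}_t H^s_x$ norm of $Q(\nabla\eta)$ needed to absorb the $M$ factor without spoiling the $\TT^{(5-3s)/6}$ target.
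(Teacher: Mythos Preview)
Your arguments for~(i) and~(ii) are correct and essentially match the paper's proof (the paper interpolates directly on $a-\mathbb{I}_3$ rather than on $\nabla\eta-\mathbb{I}_3$, but this is immaterial).

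The gap is in~(iii). The estimate you invoke, pairing $\Vert\psi_{\TT}\Vert_{H^{(s-1)/2}_t}$ with $\Vert Q(\nabla\eta)(\nabla v)\bar v\Vert_{L^\infty_t L^2_x}$, is not a valid product inequality: a genuine Moser/Leibniz estimate in time produces a second term $\Vert\psi_{\TT}\Vert_{L^\infty_t}\Vert Q(\nabla\eta)(\nabla v)\bar v\Vert_{H^{(s-1)/2}_t L^2_x}$, and since $\Vert\psi_{\TT}\Vert_{L^\infty_t}=1$ while $Q(\nabla\eta)(\nabla v)\bar v$ is neither compactly supported in time nor small (the leading part $Q(\mathbb{I}_3)$ is a nonzero constant), this term carries no power of~$\TT$. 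Your final paragraph notes that (ii) controls $Q(\nabla\eta)$ in $H^{s-1}_t H^s_x$, but only $Q(\nabla\eta)-Q(\mathbb{I}_3)$ is small there; the constant part survives and the cross term $\Vert(\nabla v)\bar v\Vert_{H^{(s-1)/2}_t L^2_x}$ has no smallness at all.

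The device the paper uses instead is an \emph{asymmetric} product estimate in time,
\[
\Vert AB\Vert_{H^{\alpha}_t L^2_x}\lec \Vert A\Vert_{H^{1/2+\delta}_t H^{3/2+\delta}_x}\,\Vert B\Vert_{H^{\alpha}_t L^2_x}
\qquad(0\le\alpha\le\tfrac12+\delta),
\]
which places the full $H^{\alpha}_t$ weight on a single factor and only requires $H^{1/2+}_t$ (not $L^\infty_t$) on the other. Applying this twice with $B=\psi_{\TT}\nabla\eta$ (the only factor with $[0,2\TT]$ support) and $A\in\{\bar v,\nabla v\}$, one reduces to estimating $\Vert\psi_{\TT}\nabla\eta\Vert_{H^{(s-1)/2}_t H^s_x}$; interpolating $\Vert\psi_{\TT}\nabla\eta\Vert_{L^2_t H^s_x}\lec\TT^{1/2}$ against $\Vert\psi_{\TT}\nabla\eta\Vert_{H^1_t H^s_x}\lec\TT^{-1/2}$ and multiplying by $\Vert\nabla v\Vert_{H^{1/2+}_t L^2_x}\lec M\lec\TT^{-1/6}$ gives exactly $\TT^{(5-3s)/6}$. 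Your sketch lacks this asymmetric estimate, and without it the $\TT$-smallness in~(iii) cannot be closed.
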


\colb
\begin{proof}[Proof of Lemma~\ref{L05}]
(i)  An application of the fundamental theorem of calculus to \eqref{EQ211} leads to
\begin{align}
\begin{split}
\Vert \nabla \eta -\mathbb{I}_3 \Vert_{L^{\infty}_t H_x^{s}} 
\lec  
\int_{0}^{1} \Vert  \psi_{\TT} \nabla v \Vert_{H^{s}}
\, d\tau
\lec  
\int_{0}^{2\TT}  \Vert v \Vert_{H^{s+1}}
\, d\tau
\lec  
\TT^{1/2}
\Vert v \Vert_{L^2_t H_x^{s+1}}
\lec
\TT^{1/3} 
,
\end{split}
\label{EQ97}
\end{align}
where we used 
$\Vert v \Vert_{K^{s+1}} \leq M$
and
$\TT\in (0, 1/ M^6 ]$ in the last step.
For the cofactor matrix $a=\cof (\nabla\eta)$, we have the formula
\begin{equation}
a_{ij}
=
(\cof \nabla \eta)_{ij}
=
\frac12 \epsilon_{imn} \epsilon_{jkl} \partial_{m}\eta_k\partial_{n}\eta_l
\comma i,j=1,2,3,
\label{EQ107}
\end{equation}
where $\epsilon_{ijk}$ denotes the permutation (Levi-Civita) symbol.
From \eqref{EQ107}, we easily deduce
\begin{align}
\begin{split}
a_{ij} - \delta_{ij}
&=
\frac12 \epsilon_{imn} \epsilon_{jkl}
(\partial_{m}\eta_k-\delta_{mk})(\partial_{n}\eta_l-\delta_{nl})
+ \epsilon_{ikn}\epsilon_{jkl} (\partial_n\eta_l-\delta_{nl})
\comma i,j=1,2,3
,   
\end{split}
\label{EQ108}
\end{align}
by using the identities
$\epsilon_{ijk}\epsilon_{imn}=\delta_{jm}\delta_{kn}-\delta_{jn}\delta_{km}$
and
$\epsilon_{imn}\epsilon_{jmn}=2\delta_{ij}$. 
From \eqref{EQ97} and \eqref{EQ108} we obtain, with the help of the algebra property of $H^{s}$ for $s>3/2$,
\begin{align}
\begin{split}
\Vert a - \mathbb{I}_3 \Vert_{L^{\infty}_t H_x^{s}} 
\lec
\Vert \nabla\eta - \mathbb{I}_3\Vert_{L^{\infty}_t H_x^{s}}^2
+
\Vert \nabla\eta - \mathbb{I}_3\Vert_{L_t^{\infty} H_x^{s}}
\lec 
\TT^{1/3}
,
\end{split}
\llabel{EQ109}
\end{align}
where we used $\TT\lec1$.
Using the identity
$a a^{T}- \mathbb{I}_3=(a-\mathbb{I}_3) (a^{T}- \mathbb{I}_3)+(a-\mathbb{I}_3)+(a^{T}- \mathbb{I}_3)$, we deduce that
\begin{align}
\begin{split}
\Vert a a^{T}- \mathbb{I}_3 \Vert_{L^{\infty}_t H_x^{s}} 
&
\lec 
\Vert a- \mathbb{I}_3 \Vert_{L^{\infty}_t H_x^{s}}^2
+ 
\Vert a -\mathbb{I}_3 \Vert_{L^{\infty}_t H_x^{s}}
\lec
\TT^{1/3}
,
\end{split}
\label{EQ111}
\end{align}
thus completing the proof of (i).

(ii) In order to bound the first term, we use the Sobolev interpolation inequality, which gives
\begin{align}
\begin{split}
\Vert a - \mathbb{I}_3 \Vert_{H^{s-1}_t H_x^s}
&
\lec 
\Vert  a-\mathbb{I}_3 \Vert_{H^{ 1}_t H^s_x}^{s-1} 
\Vert  a-\mathbb{I}_3 \Vert_{L^{2}_t H^s_x}^{2-s}
.
\end{split}
\label{EQ98}
\end{align}
From \eqref{EQ107}, we write
\begin{align}
\begin{split}
\partial_{t}    
a_{ij} 
&=
\epsilon_{imn} \epsilon_{jkl}
\psi_{\TT}   \partial_{m} v_k \partial_{n}\eta_l   \comma i,j=1,2,3,
\llabel{EQ209}
\end{split}
\end{align}
which leads to
\begin{align}
\begin{split}
\Vert  a-\mathbb{I}_3 \Vert_{H^{ 1}_t H_x^s}
&
\lec
\Vert  \partial_{t}a \Vert_{L^{2}_t H_x^s}
+ 
\Vert  a-\mathbb{I}_3 \Vert_{L^{2}_t H_x^s}
\lec
\Vert \nabla v\Vert_{L^2_t H^s_x}
+
1
\lec
M
.
\end{split}
\label{EQ136}
\end{align}
From \eqref{EQ98} and \eqref{EQ136}, we deduce that
\begin{align}
  \begin{split}
\Vert a - \mathbb{I}_3 \Vert_{H^{s-1}_t H_x^s}
\lec
\TT^{(2-s)/3}
M^{s-1}
\lec
\TT^{(5-3s)/6}
,
  \end{split}
   \llabel{EQ24}
\end{align}
where we used (i) in the last inequality.
The second term in (ii) is estimated analogously to \eqref{EQ111} as 
\begin{align}
\begin{split}
\Vert a a^{T}- \mathbb{I}_3 \Vert_{H^{s-1}_t H_x^{s}} 
&
\lec 
\Vert a- \mathbb{I}_3 \Vert_{H^{s-1}_t H_x^{s}}^2
+ 
\Vert a -\mathbb{I}_3 \Vert_{H^{s-1}_t H_x^{s}}
\lec
\TT^{(5-3s)/6}
.
\end{split}
   \llabel{EQ25}
\end{align}
Therefore, the proof of (ii) is completed.

(iii) We claim that,
for any $\delta_1, \delta_2>0$ and $0<\alpha\leq 1/2+\delta_1$, we have
\begin{align}
\begin{split}
\Vert A B  \Vert_{H^{\alpha}_t L^2_x}   
&\lec
\Vert A   \Vert_{H^{1/2+\delta_1}_t L^{2}_x}
\Vert B   \Vert_{H^{\alpha}_t H^{3/2+\delta_2}_x}
\label{EQ403}
\end{split}
\end{align}
and
\begin{align}
\begin{split}
\Vert A B  \Vert_{H^{\alpha}_t L^2_x}   
&\lec
\Vert A   \Vert_{H^{1/2+\delta_1}_t H^{3/2+\delta_2}_x}
\Vert B   \Vert_{H^{\alpha}_t L^2_x}
\label{EQ404}
\end{split}
\end{align}
on the domain $[0,1]\times \Omegaf$.
Using extensions, we may assume that the domain is actually $\mathbb{R} \times \mathbb{R}^3$.
Then
\begin{align}
\begin{split}
\Vert A B  \Vert_{H^{\alpha}_t L^2_x}
&=    \Vert A B  \Vert_{L_{x}^{2}H^{\alpha}_t}
\lec  \bigl\Vert
\Vert A   \Vert_{H^{1/2+\delta_1}_t}
\Vert B   \Vert_{H^{\alpha}_t}
\bigr\Vert_{L^2_{x}}
\lec
\Vert A   \Vert_{L_{x}^{2} H^{1/2+\delta_1}_t}
\Vert B   \Vert_{L_{x}^\infty H^{\alpha}_t}
\\&
\lec
\Vert A   \Vert_{H_t^{1/2+\delta_1} L^{2}_x}
\Vert B   \Vert_{H^{3/2+\delta_2}_x H^{\alpha}_t}
\lec
\Vert A   \Vert_{H_t^{1/2+\delta_1} L^{2}_x}
\Vert B   \Vert_{ H^{\alpha}_t H^{3/2+\delta_2}_x}
.
\end{split}
\label{EQ379}
\end{align}
Therefore, the proof of the claim \eqref{EQ403} is completed. 
The proof of \eqref{EQ404} is similar as in \eqref{EQ379}, and thus we omit the details.
Using \eqref{EQ404}, we obtain
\begin{align}
 \Vert a_{t} \bar{v} \Vert_{H^{(s-1)/2}_t L_x^{2}} 
& 
\lec 
\Vert a_t \Vert_{H_t^{(s-1)/2}  L^2_x}
\Vert \bar{v}\Vert_{H_t^{1/2+(s-3/2)/4} H^{3/2+(s-3/2)/2}_x}
\lec
\Vert a_t \Vert_{H_t^{(s-1)/2}  L^2_x}
\Vert \bar{v} \Vert_{K^{s+1}},
\llabel{EQ305}
\end{align}
where we used Lemma~\ref{L02} in the last step.
To estimate the term $\Vert a_t \Vert_{H_t^{(s-1)/2} L^2_x}$, we appeal to \eqref{EQ403} and the Sobolev interpolation, obtaining
\begin{align}
\begin{split}
\Vert a_t \Vert_{H_t^{(s-1)/2} L^2_x}
&
\lec
\Vert \psi_{\TT} \nabla \eta \Vert_{H^{(s-1)/2}_t H^s_x} 
\Vert \nabla v \Vert_{H^{s-1}_t L^2_x}
\lec
\Vert \psi_{\TT} \nabla \eta \Vert_{H^{1}_t H^s_x}^{(s-1)/2} 
\Vert \psi_{\TT} \nabla \eta \Vert_{L^{2}_t H^s_x}^{(3-s)/2} 
\Vert v \Vert_{K^{s+1}}
\\&
\lec
(\TT^{-1/2} + M)^{(s-1)/2} \TT^{(3-s)/4}  M
\lec
\TT^{(5-3s)/6} 
.
\llabel{EQ301}
\end{split}
\end{align}
The proof of (iii) is thus complete.
\end{proof}

\subsection{Uniform boundedness of the sequence}
We denote the right side of \eqref{EQ81} by $M$, i.e.,
\begin{align}
	M:=    C
	\bigl(
	1 +   \Vert  v_{0} \Vert_{H^{s}}^{7}
	+ 
	\Vert w_{0} \Vert_{H^{s+1/2}} 
	+ 
	\Vert w_{1} \Vert_{H^{s-1/2}}
	\bigr) .
	\label{EQ681}
\end{align}
Suppose that $\Vert v\Vert_{K^{s+1}} \leq M$.
We shall prove that the mapping $\Pi$ is well-defined from $\mathcal{Z}$ to $\mathcal{Z}$, for some sufficiently large $C>0$ and $\TT=1/M^6$.
Note that the inequality \eqref{EQ90} holds
with $f$, $g$, $h$ as in \eqref{EQ44}, $\tilde g=0$, and
\begin{equation}
b_i
=  
\underbrace{\partial_{t} a_{ji} \bar{v}_{i} }_{\mathcal{B}_1}
+  
\underbrace{( a_{ji}  - \delta_{ji}) \partial_{t} \bar{v}_{i}}_{\mathcal{B}_2}
\comma
i=1,2,3.
\label{EQ46}
\end{equation}

%
We start with the term
$\Vert f \Vert_{K^{ s-1}}$.
For the space component, we use the algebra property of $H^{s}$ for $s>3/2$ to write
\begin{align}
\begin{split}
&
\Vert \partial_{j} 
\bigl( ( a_{jl}  a_{ml} 
- \delta_{jm}
) \partial_{m} \bar{v}_{k}
\bigr)
\Vert_{L^{2}_t H_x^{s-1}}
+
\Vert \partial_{j} (( a_{jk} - \delta_{jk} )  \bar{q} ) \Vert_{L^{2}_t H_x^{s-1}}
\\&\indeq
\lec 
\Vert a a^{T}  - \mathbb{I}_3 \Vert_{L^{\infty}_t H_x^{s}}
\Vert  \nabla \bar{v} \Vert_{L^{2}_t H_x^{s}}
+
\Vert a  - \mathbb{I}_3 \Vert_{L^{\infty}_t H_x^{s}}
\Vert \nabla \bar{q} \Vert_{L^{2}_t H_x^{s-1}}
\\&\indeq
\lec
\TT^{1/3}
\Vert \bar{v}\Vert_{K^{s+1}}
+
\TT^{1/3} 
\Vert \nabla \bar{q} \Vert_{K^{s-1}}
,
\end{split}
\label{EQ50}
\end{align}
where we used the Piola identity~\eqref{EQ79} in the first step and Lemma~\ref{L05} in the last step.
For the time component, we use Lemma~\ref{L02} to bound the velocity component as
\begin{align}
\begin{split}
&\Vert \partial_{j} ( 
( a_{jl}  a_{ml} 
- \delta_{jm}
) \partial_{m} \bar{v}_{k}  
) 
\Vert_{H_t^{(s-1)/2} L_x^2}
\lec
\Vert 
( a_{jl}  a_{ml} 
- \delta_{jm}
) \partial_{m} \bar{v}_{k} 
\Vert_{H^{(s-1)/2}_t H_x^{1}}
\\&\indeq
\lec
\Vert  ( a_{jl}  a_{ml} 
- \delta_{jm}
) \partial_{m} \bar{v}_{k}   \Vert_{L^{2}_t H_x^{s}}
+  
\Vert  ( a_{jl}  a_{ml} 
- \delta_{jm}
) 
\partial_{m} \bar{v}_{k}   \Vert_{H^{s/2}_t L_x^2}
\\&\indeq
\lec \Vert a a^{T}  - \mathbb{I}_3 \Vert_{L^{\infty}_t H_x^{s}}
\Vert  \nabla \bar{v} \Vert_{L^{2}_t H_x^{s}}
+ 
\Vert a a^{T}  - \mathbb{I}_3 \Vert_{H^{s-1}_t H_x^{s}}
\Vert  \nabla \bar{v} \Vert_{H^{s/2}_t L_x^{2}}
\\&\indeq 
\lec 
\TT^{(5-3s)/6}
\Vert \bar{v}\Vert_{K^{s+1}}
,
\end{split}
\label{EQ51}
\end{align}
where the third inequality follows from \eqref{EQ404}.
Similarly, the pressure part is estimated as
  \begin{align}
  \begin{split}
  &
   \Vert  \partial_{j}  (( a_{jk} - \delta_{jk} ) \bar{q})
   \Vert_{H^{(s-1)/2}_t L_x^{2}}
   \lec
   \Vert  ( a_{jk} - \delta_{jk} )\partial_{j} \bar{q}
   \Vert_{H^{(s-1)/2}_t L_x^{2}}
   \\&\indeq
    \lec
    \Vert a-\mathbb{I}_3 \Vert_{H^{s-1}_t H^s_x}
    \Vert \nabla \bar{q}\Vert_{H^{(s-1)/2}_t L^2_x}
        \lec
   \TT^{(5-3s)/6} 
   \Vert \nabla \bar{q}\Vert_{K^{s-1}}
    .
    \end{split}
    \label{EQ53}
  \end{align}
The divergence term  $\Vert g\Vert_{K^s}$ is estimated analogously to \eqref{EQ51} and \eqref{EQ53} as
\begin{align}
\begin{split}
\Vert \partial_{j}( ( a_{ji}  -     \delta_{ji})  \bar{v}_{i})  \Vert_{L^{2}_t H_x^{s}}
\lec
\Vert  ( a_{ji}  -     \delta_{ji})  \partial_{j} \bar{v}_{i}  \Vert_{L_t^{2} H_x^{s}}
\lec 
\Vert a   - \mathbb{I}_3 \Vert_{L_t^{\infty} H_x^{s}}
\Vert \nabla  \bar{v} \Vert_{L^{2}_t H_x^{s}}
\lec
\TT^{1/3} 
\Vert \bar{v}\Vert_{K^{s+1}}
\llabel{EQ54}
\end{split}
\end{align}
and
\begin{align}
\begin{split}
\Vert \partial_{j}( ( a_{ji}  -  \delta_{ji}) \bar{v}_{i} ) \Vert_{H^{s/2}_t L_x^{2}}
&
\lec
 \Vert ( a_{ji}  -  \delta_{ji}) \partial_{j} \bar{v}_{i}  \Vert_{H^{s/2}_t L_x^{2}}
\lec 
\Vert a-\mathbb{I}_3 \Vert_{H^{s-1}_t H^s_x}
\Vert \nabla \bar{v}\Vert_{H^{s/2}_t L^2_x}
\lec
\TT^{(5-3s)/6} 
\Vert \bar{v}\Vert_{K^{s+1}}
.
\llabel{EQ55}
\end{split}
\end{align}

Now, we turn to the terms $\mathcal{B}_1$ and $\mathcal{B}_2$ given in \eqref{EQ46}.
For the space part of the norm of $\mathcal{B}_1$, we appeal to H\"older's and Sobolev inequalities to obtain
\begin{align}
\begin{split}
\Vert \mathcal{B}_{1}
\Vert_{L^{2}_t H_x^{s-1}}
&
\lec 
\Vert \psi_{\TT} \nabla \eta \nabla v \bar{v} \Vert_{L^{2}_t H_x^{s-1}} 
\lec
\Vert \psi_{\TT} \nabla \eta \Vert_{L^{4}_t H_x^{s}} 
\Vert \nabla v \Vert_{L^{8}_t H_x^{s-1}} 
\Vert  \bar{v}\Vert_{L^{8}_t H_x^{s}} 
\\&
\lec
\TT^{1/4}
\Vert  \nabla \eta\Vert_{L^{\infty}_t H_x^{s}}
\Vert v\Vert_{K^{s+1}}
\Vert \bar{v}\Vert_{K^{s+1}}
\lec
\TT^{1/12} 
\Vert \bar{v}\Vert_{K^{s+1}}
,
\end{split}
\label{EQ59}
\end{align}
while for the time component, we have 
\begin{align}
\begin{split}
\Vert \mathcal{B}_{1} 
\Vert_{H^{(s-1)/2}_t L_x^{2}}
\lec
\TT^{(5-3s)/6}
\Vert \bar{v}\Vert_{K^{s+1}},
\end{split}
\label{EQ57}
\end{align}
by (iii) in Lemma~\ref{L05}.
For the term $\mathcal{B}_2$, we proceed analogously to \eqref{EQ51}--\eqref{EQ53} and write
\begin{align}
\begin{split}
&
\Vert \mathcal{B}_{2}
\Vert_{L^{2}_t H_x^{s-1}}
\lec \Vert a- \mathbb{I}_3 \Vert_{L^{\infty}_t H_x^{s}}
\Vert  \bar{v}_{t} \Vert_{L^{2}_t H_x^{s-1}} 
\lec  
\TT^{1/3} 
\Vert \bar{v}\Vert_{K^{s+1}}
\end{split}
\label{EQ56}
\end{align}
and
\begin{align}
\begin{split}
\Vert \mathcal{B}_{2} \Vert_{H^{(s-1)/2}_t L_x^{2}}
&
\lec
\Vert a-\mathbb{I}_3 \Vert_{H^{s-1}_t H^{s}_x}
\Vert \bar{v}_t\Vert_{H^{(s-1)/2}_t L^2_x}
\lec
\TT^{(5-3s)/6} 
\Vert \bar{v}\Vert_{K^{s+1}}.
\end{split}
\label{EQ58}
\end{align}

Finally, we estimate the term $\Vert h\Vert_{H^{s/2-1/4, s-1/2}_{\Gammac}}$.
For the space component, we use the classical trace inequality and proceed as in \eqref{EQ50}, obtaining
\begin{align}
\begin{split}
&
\Vert
( a_{jl} a_{ml} 
- \delta_{jm}
) \partial_{m} \bar{v}_k N^j
\Vert_{L^{2}_t H_x^{s-1/2} (\Gammac)}
+
\Vert 
( a_{jk}  - \delta_{jk} ) \bar{q} N^j
\Vert_{L^{2}_t H_x^{s-1/2} (\Gammac)}
\\&\indeq
\lec
\Vert
( a_{jl} a_{ml} 
- \delta_{jm}
) \partial_{m} \bar{v}_k
\Vert_{L^{2}_t H_x^{s}}
+
\Vert 
( a_{jk}  - \delta_{jk} ) \bar{q}
\Vert_{L^{2}_t H_x^{s}}
\\&\indeq
\lec
\TT^{1/3} 
\Vert \bar{v}\Vert_{K^{s+1}}
+
\TT^{1/3}
\Vert \bar{q}\Vert_{H^{s/2-1/2, s}}
.
\end{split}
\label{EQ47}
\end{align}
For the time component, we estimate the velocity part using Lemma~\ref{L01} as
\begin{align} 
\begin{split}
&
\Vert 
( a_{jl} a_{ml} 
- \delta_{jm}
) \partial_{m} \bar{v}_k
\Vert_{H^{s/2-1/4}_t L_x^{2}(\Gammac)}
\lec
\Vert (aa^T- \mathbb{I}_3) \nabla \bar{v}\Vert_{H^{s/2}_t L^2_x}
+
\Vert (aa^T- \mathbb{I}_3) \nabla \bar{v}\Vert_{L^2_t H^{s}_x }
\\&\indeq
\lec
\Vert aa^T- \mathbb{I}_3 \Vert_{H^{s-1}_t H^s_x} 
\Vert
 \nabla \bar{v}\Vert_{H^{s/2}_t L^2_x}
 +
 \Vert aa^T- \mathbb{I}_3 \Vert_{L^\infty_t H^{s}_x } 
 \Vert
 \nabla \bar{v}\Vert_{L^2_t H^{s}_x }
 \\&\indeq
 \lec
 \TT^{(5-3s)/6} \Vert \bar{v}\Vert_{K^{s+1}}
.
\end{split}
\label{EQ183}
\end{align}
For the pressure term, we appeal to the multiplicative Sobolev inequality to get
\begin{align}
\begin{split}
&
\Vert (a-\mathbb{I}_3) \bar{q}\Vert_{H^{s/2-1/4}_t L_x^{2}(\Gammac)}
\\&\indeq
\lec
\Vert a-\mathbb{I}_3 \Vert_{H^{s-1}_t H^{s-1/2}_x (\Gammac)} 
\Vert \bar{q}\Vert_{H^{s/2 - 1/4}_t L_x^{2} (\Gammac)}
\lec
\TT^{(5-3s)/6}
\Vert \bar{q}\Vert_{H_{\Gammac}^{s/2-1/4, s-1/2}}
,
\end{split}
\label{EQ180}
\end{align}
where we used the trace inequality.

Recall that $\TT=1/M^{6}$
and $\Vert v \Vert_{K^{s+1}} \leq M$.
Using Lemmas~\ref{L10} and~\ref{L05}, we infer from \eqref{EQ90} that
\begin{align}
\begin{split}
& \Vert \bar{v} \Vert_{K^{ s+1}}
+ 
\Vert \bar{q} \Vert_{H^{s/2-1/2, s}}
+ 
\Vert \nabla \bar{q}  \Vert_{K^{s-1}}
+ 
\Vert \bar{q}  \Vert_{H^{s/2-1/4, s-1/2}_{\Gammac}}
\\&\indeq
\leq
\CCC
(M^{6/7}
\Vert
v_{0} \Vert_{H^{s}} 
+ 
\Vert w_{0} \Vert_{H^{s+1/2}} 
+ 
\Vert w_{1} \Vert_{H^{s-1/2}})
\\&\indeq\indeq
+  
\frac{\CCC}{M^{(5-3s)}}
(\Vert \bar{v}\Vert_{K^{s+1}}
+
\Vert \bar{q}\Vert_{H_{\Gammac}^{s/2-1/4, s-1/2}}
+
\Vert \bar{q}\Vert_{H^{s/2-1/2, s}}
+
\Vert \nabla \bar{q} \Vert_{K^{s-1}})
,
\end{split}
\llabel{EQ64}
\end{align}
from where, using the restriction on $s$ and taking $C\geq 1$ in \eqref{EQ681} sufficiently large,
\begin{align}
\begin{split}
& \Vert \bar{v} \Vert_{K^{ s+1}}
+ 
\Vert \bar{q} \Vert_{H^{s/2-1/2, s}}
+ 
\Vert \nabla \bar{q}  \Vert_{K^{s-1}}
+ 
\Vert \bar{q}  \Vert_{H^{s/2-1/4, s-1/2}_{\Gammac}}
\\&\indeq
\leq
2\CCC
(M^{6/7}
\Vert
v_{0} \Vert_{H^{s}} 
+ 
\Vert w_{0} \Vert_{H^{s+1/2}} 
+ 
\Vert w_{1} \Vert_{H^{s-1/2}})
.
\end{split}
   \llabel{EQ63}
\end{align}
In order to obtain uniform boundedness, we need to assure that
  \begin{equation}
    2\CCC(M^{6/7}
    \Vert
    v_{0} \Vert_{H^{s}} 
    + 
    \Vert w_{0} \Vert_{H^{s+1/2}} 
    + 
    \Vert w_{1} \Vert_{H^{s-1/2}})
  \leq M
   ,
   \label{EQ67}
  \end{equation}
which follows from the choice
  \begin{equation}
   M= (6\CCC)^7
   \bigl(
        1 +   \Vert  v_{0} \Vert_{H^{s}}^{7}
        + 
        \Vert w_{0} \Vert_{H^{s+1/2}} 
        + 
        \Vert w_{1} \Vert_{H^{s-1/2}}
        \bigr)
   \label{EQ69}
  \end{equation}
in \eqref{EQ681}.
Thus, we have shown that the mapping $\Pi$ is well-defined from $\mathcal{Z}$ to $\mathcal{Z}$.
\colb

\subsection{Contracting property}
In this section, we shall prove
\begin{align}
\Vert \Pi v_1- \Pi v_2 \Vert_{K^{s+1}}
\leq
\frac{1}{2} \Vert v_1 - v_2 \Vert_{K^{s+1}}
\comma
v_1, v_2 \in \mathcal{Z},
\llabel{EQ298}
\end{align}
where  $\Vert v_1\Vert_{K^{s+1}} \leq M$ and $\Vert v_2 \Vert_{K^{s+1}} \leq M$ and $\TT = 1/M^6$.
The constant $M$ is chosen to be bigger than the right hand side of \eqref{EQ69}, i.e,
\begin{align}
	M = C(
	1 +   \Vert  v_{0} \Vert_{H^{s}}^{7}
	+ 
	\Vert w_{0} \Vert_{H^{s+1/2}} 
	+ 
	\Vert w_{1} \Vert_{H^{s-1/2}}
	)
	\label{EQ699}
\end{align}
where $C\geq (6\bar{C})^7$ is a sufficiently large constant to be determined below. 

Let $v_1, v_2 \in \mathcal{Z}$.
Denote by $(\eta_1, \eta_2)$ and $(a_1, a_2)$ the corresponding Lagrangian flow maps and cofactor matrices as in \eqref{EQ211}--\eqref{EQ100}, respectively.
First we solve for $(\bar{w}_1, \bar{w}_2)$ from \eqref{EQ385}--\eqref{EQ488} with the same initial data \eqref{EQ489}.
To obtain the next iterate $(\bar{v}_1, \bar{q}_1)$ and $(\bar{v}_2, \bar{q}_2)$, we solve the linear Stokes equations \eqref{EQ83}--\eqref{EQ84} with the boundary conditions \eqref{EQ86}--\eqref{EQ190} and the same initial data \eqref{EQ191}.
The equation for the differences
$(V,Q,W,E,A)=(\bar{v}_1-\bar{v}_2, \bar{q}_1-\bar{q}_2, \bar{w}_1- \bar{w}_2, \eta_1-\eta_2, a_1-a_2)$ reads
  \begin{align}
  &
  V_t
    -  \Delta V
    + \nabla  Q =  F
  \inon{in  $ (0,1)\times\Omegaf    $}
   \llabel{EQ140}
    \\&
    \div V = G
   \inon{in  $ (0,1)\times\Omegaf     $}
   \llabel{EQ141}
   \\&
   W_{tt} 
     - \Delta W=0    \inon{in  $ (0,1)\times\Omegae    $}
   \llabel{EQ142}
  \end{align}
subject to the boundary and initial conditions
  \begin{align}
   &
   W(t,x)
   =
   \int_0^t
   \psi_{\TT}(\tau) (v_1 (\tau, x) -v_2 (\tau, x)) \, d\tau
     \inon{on  $ (0,1)\times\Gammac    $}
   \llabel{EQ143}
   \\&
   \frac{\partial V}{\partial N} - Q N
   = \frac{\partial W}{\partial N} + H
   \inon{on  $ (0,1)\times\Gammac    $}
   \llabel{EQ144}
   \\&
   V=0    \inon{on  $ (0,1)\times\Gammaf    $}
   \llabel{EQ145}
   \\&
   W, V, Q, E, A~~
   \text{periodic in the $y_1$~and~$y_2$ directions}
   \llabel{EQ146}
   \\&
   (W, W_t, V) (0) = (0, 0, 0),
   \llabel{EQ889}
  \end{align}
where
  \begin{equation}
   (F,G,H)=(f_1-f_2, g_1-g_2, h_1-h_2)
   ;
   \llabel{EQ168}
  \end{equation}
here 
$(f_1, g_1, h_1)$ and
$(f_2, g_2, h_2)$ are the forcing terms in \eqref{EQ44} corresponding to $(a_1, \eta_1, \bar{v}_1, \bar{q}_1)$ and $(a_2, \eta_2, \bar{v}_2, \bar{q}_2)$, respectively.

We first derive estimates on the difference of the Lagrangian flow map and cofactor matrix.
\cole
\begin{Lemma}
\label{L07}
Let $v_1, v_2\in K^{s+1} ((0,1)\times \Omegaf)$.
Suppose that $\Vert v_1\Vert_{K^{s+1}}\leq M$ and $\Vert v_2\Vert_{K^{s+1}}\leq M$ for some $M\geq 1$. Then for $\TT\in (0,1/M^6]$, we have
\begin{enumerate}[label=(\roman*)]
\item $\Vert \nabla E\Vert_{L^{\infty}_t H_x^{s}} +\Vert A \Vert_{L^{\infty}_t H_x^{s}} +\Vert a_1 a_1^{T} - a_2 a_2^{T} \Vert_{L^{\infty}_t H_x^{s}} 
\leq \TT^{1/2}  \Vert v_1 -v_2 \Vert_{K^{s+1}}$,
\item $\Vert A  \Vert_{H^{s-1}_t H^{s}_x} +\Vert a_1 a_1^{T}
- a_2 a_2^{T}  \Vert_{H^{s-1}_t H^{s}_x} 
      \leq 
      \TT^{(2-s)/2} 
      \Vert v_1 - v_2\Vert_{K^{s+1}}$.
\end{enumerate}
We emphasize that the implicit constants are independent of $M$ and $\TT$.
\end{Lemma}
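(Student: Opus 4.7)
The plan is to mirror the proof of Lemma~\ref{L05}, systematically replacing absolute bounds by difference bounds and exploiting the product decomposition $X_1Y_1 - X_2Y_2 = (X_1-X_2)Y_1 + X_2(Y_1-Y_2)$. Throughout, I would freely use the algebra property of $H^{s}$ for $s>3/2$, Lemma~\ref{L05} applied to $v_1$ and $v_2$ separately, and the assumption $\TT \leq 1/M^{6}$.

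First I would treat (i). Subtracting \eqref{EQ211} for $v_1$ and $v_2$ yields
\[
\nabla E(t,x) = \int_{0}^{t} \psi_{\TT}(\tau)\, \nabla(v_1-v_2)(\tau,x)\,d\tau,
\]
so by Cauchy--Schwarz in time (as in \eqref{EQ97})
\[
\Vert \nabla E\Vert_{L^{\infty}_t H^{s}_x} \lec \TT^{1/2}\Vert v_1-v_2\Vert_{K^{s+1}}.
\]
For $A$ I would use the cofactor identity \eqref{EQ108} for both $\eta_1$ and $\eta_2$, subtract, and write the resulting expression as a sum of terms each of which contains one factor of the form $\partial_m E_k$ multiplied by a factor of the form $\partial_n\eta_{j,l}-\delta_{nl}$ or by $1$. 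The algebra property together with Lemma~\ref{L05}(i) for both flow maps then gives $\Vert A\Vert_{L^{\infty}_t H^{s}_x}\lec (1+\TT^{1/3})\Vert \nabla E\Vert_{L^{\infty}_t H^{s}_x}\lec \TT^{1/2}\Vert v_1-v_2\Vert_{K^{s+1}}$. The third term is handled by the decomposition $a_1 a_1^{T}-a_2 a_2^{T}=A\,a_1^{T}+a_2\,A^{T}$, combined with the uniform bound $\Vert a_i\Vert_{L^{\infty}_t H^{s}_x}\lec 1$ from Lemma~\ref{L05}(i).

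For part (ii), I would use the Sobolev interpolation exactly as in \eqref{EQ98}:
\[
\Vert A\Vert_{H^{s-1}_t H^{s}_x}\lec \Vert A\Vert_{H^{1}_t H^{s}_x}^{s-1}\Vert A\Vert_{L^{2}_t H^{s}_x}^{2-s}.
\]
The second factor is already controlled by part (i): $\Vert A\Vert_{L^{2}_t H^{s}_x}\lec \TT^{1/2}\Vert v_1-v_2\Vert_{K^{s+1}}$. For the first factor, I would compute $\partial_t A$ from the analogue of the formula used in \eqref{EQ136}, obtaining schematically
\[
\partial_t A = \psi_{\TT}\bigl(\nabla(v_1-v_2)\,\nabla\eta_1 + \nabla v_2\,\nabla E\bigr),
\]
and then estimate in $L^{2}_t H^{s}_x$ by the algebra property, using Lemma~\ref{L05}(i) to bound $\nabla\eta_1$ in $L^{\infty}_t H^{s}_x$ and part (i) above to bound $\nabla E$ in $L^{\infty}_t H^{s}_x$. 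The constraint $\TT\leq 1/M^{6}$ absorbs the factor $M$ coming from $\Vert \nabla v_2\Vert_{L^2_t H^s_x}$ through $M\TT^{1/2}\leq 1$, yielding $\Vert A\Vert_{H^{1}_t H^{s}_x}\lec \Vert v_1-v_2\Vert_{K^{s+1}}$. Combining the two factors in the interpolation gives the advertised bound $\TT^{(2-s)/2}\Vert v_1-v_2\Vert_{K^{s+1}}$. The $aa^{T}$ difference is then handled exactly as in part (i), via $a_1a_1^{T}-a_2a_2^{T}=A a_1^{T}+a_2 A^{T}$, but now using the $H^{s-1}_t H^{s}_x$ norm together with Lemma~\ref{L05}(ii) applied to $a_1, a_2$ to control the remaining factors.

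The main technical obstacle I anticipate is bookkeeping the factors of $\TT$ and $M$ in the estimate of $\partial_t A$, since the term $\nabla v_2 \nabla E$ a priori produces a large factor $M$ that must be absorbed by $\TT^{1/2}$; everything else is a straightforward adaptation of the proof of Lemma~\ref{L05}, and no new analytic tool is required.
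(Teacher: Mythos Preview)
Your proposal is correct and follows essentially the same route as the paper's proof: Cauchy--Schwarz for $\nabla E$, the cofactor formula \eqref{EQ107}/\eqref{EQ108} and the decomposition $a_1a_1^{T}-a_2a_2^{T}=Aa_1^{T}+a_2A^{T}$ for part~(i), then the interpolation \eqref{EQ98} combined with the $L^2_tH^s_x$ estimate of $\partial_t A$ (absorbing the factor $M$ via $M\TT^{1/2}\le 1$) for part~(ii), and finally the algebra property of $H^{s-1}_tH^{s}_x$ together with Lemma~\ref{L05}(ii) for the $aa^{T}$-difference. The bookkeeping you flag is exactly the one point requiring care, and you handle it correctly.
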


\colb
\begin{proof}[Proof of Lemma~\ref{L07}]
(i) Similarly to \eqref{EQ97}, we appeal to the Cauchy-Schwarz inequality to get
\begin{align}
\begin{split}
\Vert \nabla E\Vert_{L^{\infty}_t H_x^{s}}
\lec
\int_0^{2\TT} \Vert \nabla (v_1 -v_2) \Vert_{H^{s}}
\, d\tau
\lec
\TT^{1/2} \Vert v_1 -v_2\Vert_{K^{s+1}}
.
\label{EQ888}
\end{split}
\end{align}
From \eqref{EQ107} and \eqref{EQ888} it follows that
\begin{align}
\begin{split}
\Vert A\Vert_{L^{\infty}_t H_x^{s}} 
&
\lec 
\Vert \nabla E \Vert_{L^\infty_t H_x^{s}} 
( \Vert \nabla \eta_1 \Vert_{L^{\infty}_t H_x^{s}}  
+ \Vert \nabla \eta_2 \Vert_{L^\infty_t H_x^{s}}
) 
\lec 
\TT^{1/2} \Vert v_1 -v_2 \Vert_{K^{s+1}}
\end{split}
\label{EQ66}
\end{align} 
and
\begin{align}
\begin{split}
\Vert a_1 a_1^{T} 
- 
a_2 a_2^{T} \Vert_{L^{\infty}_t H_x^{s}} 
&
\lec  
\Vert A \Vert_{L^{\infty}_t H_x^{s}}  
( 
\Vert a_1 \Vert_{L^{\infty}_t H_x^{s}}
+ 
\Vert a_2 \Vert_{L^{\infty}_t H_x^{s}}
) 
\lec 
\TT^{1/2} 
\Vert v_1 -v_2 \Vert_{K^{s+1}}
,
\end{split}
\label{EQ68}
\end{align}
where we also used Lemma~\ref{L05}.

(ii) We proceed analogously to \eqref{EQ66} and \eqref{EQ68}, obtaining
\begin{align*}
\begin{split}
\Vert A \Vert_{H^{s-1}_t H^{s}_x } 
&
\lec 
\Vert A \Vert_{L^{2}_t H_x^{s}}^{2-s} 
\Vert A \Vert_{H^{1}_t H_x^{s}}^{s-1}
\\&
\lec
\TT^{(2-s)/2}
\Vert v_1 -v_2\Vert_{K^{s+1}}^{2-s} 
((\TT^{1/2} M+1) \Vert v_1 -v_2\Vert_{K^{s+1}})^{s-1}
+
\TT^{1/2} 
\Vert v_1 -v_2\Vert_{K^{s+1}}
\\&
\lec
\TT^{(2-s)/2} 
\Vert v_1 - v_2\Vert_{K^{s+1}}
\end{split}
\end{align*}
and
\begin{align*}
\begin{split}
&
\Vert a_1 a_1^T-a_2 a_2^T \Vert_{H^{s-1}_t H^{s}_x } 
\lec  
\Vert A \Vert_{H^{s-1}_t H_x^{s}}  
( 
\Vert a_1 \Vert_{H^{s-1}_t H_x^{s}}
+ 
\Vert a_2 \Vert_{H^{s-1}_t H_x^{s}}
) 
\lec
\TT^{(2-s)/2} 
\Vert v_1 - v_2\Vert_{K^{s+1}}
,
\end{split}
\end{align*} 
where we also used Lemma~\ref{L05}.
\end{proof}

\begin{proof}[Proof of Theorem~\ref{T01}]
First, we proceed as in \eqref{EQ29}--\eqref{EQ30} to get
\begin{align}
\begin{split}
&
\Vert V \Vert_{K^{ s+1}}
+ 
\Vert Q  \Vert_{H^{s/2-1/2, s}}
+ 
\Vert \nabla Q  \Vert_{K^{s-1}}
+ 
\Vert Q  \Vert_{H^{s/2-1/4, s-1/2}_{\Gammac}}
\\&\indeq
\lec 
\TT^{1/6}
\Vert v_1 -v_2\Vert_{K^{s+1}}
+
\Vert F \Vert_{K^{ s-1}}
+ 
\Vert G      \Vert_{K^{ s}}
+ 
\Vert H \Vert_{H^{s/2-1/4, s-1/2}_{\Gammac}}
+ 
\Vert B  \Vert_{K^{ s-1}} 
,
\end{split}
\llabel{EQ179}
\end{align}
where
\begin{align}
B
=
(\partial_{t} a_{1ji} \bar{v}_{1i} 
+  
( a_{1ji}  - \delta_{ji}) \partial_{t} \bar{v}_{1i})
-
(\partial_{t} a_{2ji} \bar{v}_{2i} 
+  
( a_{2ji}  - \delta_{ji}) \partial_{t} \bar{v}_{2i})
.
\label{EQ800}
\end{align}
For the space component of the norm of 
\begin{align}
\begin{split}
F_{k}
&
=  
\partial_{j}(
(a_{1jl} a_{1ml} - a_{2jl} a_{2ml}) \partial_m \bar{v}_{1k}
)
+
\partial_j (
(a_{2jl} a_{2ml} - \delta_{jm}) \partial_m V_k
)
\\&\indeq
-
(a_{1jk}- a_{2jk})
\partial_{j} \bar{q}_1
-
(a_{2jk}- \delta_{jk})
\partial_{j} Q
,
\end{split}
\llabel{EQ60}
\end{align}
where $k=1,2,3$, we have
\begin{align}
\begin{split}
\Vert F_k \Vert_{L^{2}_t H_x^{s-1}} 
&
\lec 
\Vert a_1 a_1^{T}  -  a_2 a_2^{T} \Vert_{L^{\infty}_t H_x^{s}}
\Vert  \bar{v}_1 \Vert_{L^{2}_t H_x^{s+1}} 
+    
\Vert a_2 a_2^{T} - \mathbb{I}_3 \Vert_{L^{\infty}_t H_x^{s}}   
\Vert  V \Vert_{L^{2}_t H_x^{s+1}} 
\\&\indeq
+    
\Vert A\Vert_{L^{\infty}_t H_x^{s}} 
\Vert  \bar{q}_1 \Vert_{L^{2}_t H_x^{s}} 
+    
\Vert a_2 - \mathbb{I}_3 \Vert_{L^{\infty}_t H_x^{s}}  
 \Vert Q \Vert_{L^{2}_t H_x^{s}} 
\\& 
\lec  
\TT^{1/3} 
(
\Vert v_1 - v_2\Vert_{K^{s+1}}
+
\Vert V\Vert_{K^{s+1}} 
+
\Vert Q \Vert_{H^{s/2-1/2, s}}
 )
,
\end{split}
\label{EQ70}
\end{align}
where we used Lemmas~\ref{L05}--\ref{L07} in the last step.
Next, we bound the time component of the norm 
of
$F_k$
similarly to \eqref{EQ51} and \eqref{EQ53} as
\begin{align}
\begin{split}
&
\Vert F_k \Vert_{H_t^{(s-1)/2} L_x^{2}} 
\lec
\TT^{(5-3s)/6} 
(
\Vert v_1 - v_2\Vert_{K^{s+1}}
+
\Vert V\Vert_{K^{s+1}} 
+
\Vert Q \Vert_{H^{s/2-1/2, s}}
)
,
\end{split}
\label{EQ71}
\end{align}
again by using Lemmas \ref{L05}--\ref{L07}.
The term $G$ is bounded similarly to \eqref{EQ70} and \eqref{EQ71} as
\begin{align}
\begin{split}
\Vert G \Vert_{L^{2}_t H_x^{s}}
&
\lec 
\Vert A \Vert_{L^{\infty}_t H_x^{s}} 
\Vert  \bar{v}_1 \Vert_{L^{2}_t H_x^{s+1}} 
+    
\Vert a_2 - \mathbb{I}_3 \Vert_{L^{\infty}_t H_x^{s}}  
\Vert V \Vert_{L^{2}_t H_x^{s+1}} 
\\&
\lec  
\TT^{1/3} 
(\Vert v_1 - v_2\Vert_{K^{s+1}}
+
 \Vert V \Vert_{K^{s+1}})
,
\end{split}
\llabel{EQ72}
\end{align}
and
\begin{align}
\begin{split}
\Vert G\Vert_{H^{s/2}_t L_x^{2}} 
&\lec 
\Vert A \Vert_{H^{s-1}_t H^{s}_x}
\Vert \nabla \bar{v}_1 \Vert_{H^{s/2}_t L_x^{2}}
+
 \Vert a_1 - \mathbb{I}_3 \Vert_{H^{ s-1}_t H^{s}_x}
\Vert \nabla V\Vert_{H^{s/2}_t L_x^{2}}
\\&
\lec 
\TT^{(5-3s)/6} 
(\Vert v_1 - v_2\Vert_{K^{s+1}}
+
\Vert V\Vert_{K^{s+1}})
.
\end{split}
\llabel{EQ73}
\end{align}
Next, we estimate the difference $H$. 
For the space component, we use,
in analogy with \eqref{EQ47},
\begin{align}
\begin{split}
\Vert H \Vert_{L^{2}_t H_x^{s-1/2}(\Gammac)}
&
\lec
\Vert a_1 a_1^{T}  - a_2 a_2^{T} \Vert_{L^{\infty}_t H_x^{s}}
\Vert  \nabla \bar{v}_1 \Vert_{L^2_t H^s_x}
+
\Vert a_2  a_2^{T}  - \mathbb{I}_3 \Vert_{L^{\infty}_t H_x^{s}}
\Vert  \nabla V\Vert_{L^2_t H^s_x}
\\&\indeq\indeq
+
\Vert A \Vert_{L^{\infty}_t H_x^{s}}
\Vert \bar{q}_1 \Vert_{L^2_t H_x^{s}} 
+
\Vert a_2 - \mathbb{I}_3 \Vert_{L^{\infty}_t H_x^{s}}
\Vert Q \Vert_{L^2_t H_x^{s}} 
\\&
\lec 
\TT^{1/3} 
(\Vert v_1 - v_2\Vert_{K^{s+1}}
+
\Vert V\Vert_{K^{s+1}} 
+
\Vert Q \Vert_{H^{s/2-1/2, s}}
)
,
\end{split}
\llabel{EQ74}
\end{align} 
while for the time component we have, similarly to \eqref{EQ183} and \eqref{EQ180},
\begin{align}
\begin{split}
\Vert H \Vert_{H^{s/2-1/4}_t L_x^{2}(\Gammac)} 
\lec
\TT^{(5-3s)/6}
(\Vert v_1 - v_2\Vert_{K^{s+1}}
+
\Vert V\Vert_{K^{s+1}} 
+
\Vert Q \Vert_{H^{s/2-1/2, s}}
+
\Vert Q \Vert_{H_{\Gammac}^{s/2-1/4, s-1/2}}
).
\end{split}
\llabel{EQ75}
\end{align} 
Finally, we estimate the term $B$ given in \eqref{EQ800}.
The space component of the norm is estimated,
in analogy with \eqref{EQ59} and \eqref{EQ56}, as
\begin{align}
\begin{split}
\Vert B \Vert_{L^{2}_t H_x^{s-1}}
&\lec 
\TT^{1/12}
(\Vert v_1 -v_2\Vert_{K^{s+1}}
+
 \Vert V\Vert_{K^{s+1}})
,
\end{split}
\llabel{EQ62}
\end{align}
while for the time component, analogously to 
\eqref{EQ57} and \eqref{EQ58},
\begin{align}
\begin{split}
\Vert B\Vert_{H^{(s-1)/2}_t L_x^{2}}   
&
\lec
\TT^{(5-3s)/6}
(\Vert V \Vert_{K^{s+1}}
+
\Vert v_1 - v_2\Vert_{K^{s+1}})
.
\end{split}
\llabel{EQ187}
\end{align}
Summarizing the above estimates, we arrive at
\begin{align}
\begin{split}
&
\Vert V \Vert_{K^{ s+1}}
+ 
\Vert Q  \Vert_{H^{s/2-1/2, s}}
+ 
\Vert \nabla Q  \Vert_{K^{s-1}}
+ 
\Vert Q \Vert_{H^{s/2-1/4, s-1/2}_{\Gammac}}
\\&\indeq
\lec
\TT^{1/6}
\Vert v_1 -v_2\Vert_{K^{s+1}}
+
\TT^{(5-3s)/6}
(\Vert v_1 - v_2\Vert_{K^{s+1}}
+
\Vert V\Vert_{K^{s+1}} 
+
\Vert Q \Vert_{H^{s/2-1/2, s}}
+
\Vert Q \Vert_{H_{\Gammac}^{s/2-1/4, s-1/2}}
).
\end{split}
\llabel{EQ77}
\end{align}
Taking the constant $C\geq (6\bar{C})^7$ in \eqref{EQ699} sufficiently large and thus $\TT = 1/M^6$ sufficiently small, we get
\begin{align*}
\begin{split}
\Vert \Pi(v_1 - v_2)\Vert_{K^{s+1}}
\leq
\frac{1}{2} 
\Vert v_1 - v_2\Vert_{K^{s+1}}
\comma
v_1, v_2 \in \mathcal{Z}.
\end{split}
\end{align*}
Therefore, we have established that the mapping $\Pi$ is contracting.
Then
 we proceed analogously to~\eqref{EQ380}--\eqref{EQ445} to obtain the interior regularity estimate 
\begin{align*}
	\Vert \tilde{\Pi} v \Vert_{C([0, 1], H^{s/2+5/4} (\Omegae))}
	+
	\Vert \partial_{t}(\tilde{\Pi} v) \Vert_{C([0, 1], H^{s/2+1/4}(\Omegae))}
	\les_{\TT}
	\Vert w_0\Vert_{H^{s+1/2} (\Omegae)}
	+
	\Vert w_1\Vert_{H^{s-1/2} (\Omegae)}
	+
	\Vert v\Vert_{K^{s+1}}.
\end{align*}
Applying the similar estimate for the difference $\bar{w}_1 - \bar{w}_2 = \tilde{\Pi} v_2 - \tilde{\Pi} v_1$, we arrive at
\begin{align}
	\begin{split}
		\Vert \tilde{\Pi} (v_2-v_1) \Vert_{C([0, 1], H^{s/2+5/4} (\Omegae))}
		+
		\Vert \partial_{t} (\tilde{\Pi} (v_2-v_1)) \Vert_{C([0, 1], H^{s/2+1/4}(\Omegae))}
		&
		\lec_{\TT}
		\Vert v_2-v_1\Vert_{K^{s+1}}
		.
	\end{split}
	\llabel{EQ645}
\end{align}
Now, to pass to the limit, we form the sequence
\begin{align}
	\begin{split}
		v^{(n+1)} = \Pi v^{(n)}
		\and
		w^{(n+1)} = \tilde{\Pi} v^{(n)}
		\comma n\in\mathbb{N}_0
	\end{split}
	\llabel{EQ649}
\end{align}
with
\begin{align}
	\begin{split}
		v^{(0)} = v_0.
	\end{split}
	\llabel{EQ749}
\end{align}
Then we pass to the limit to obtain that there exists a unique solution
\begin{align*}
	\begin{split}
		(v,q,w, w_t) 
		&
		\in 
		K^{s+1} ((0, 1)\times \Omegaf) \times  H^{s/2-1/2, s}  ((0, 1)\times \Omegaf)
		\\&\indeq\indeq
		\times C([0, 1], H^{s/2+ 5/4 }(\Omegae))
		\times
		C([0, 1], H^{s/2 +1/4 }(\Omegae))
	\end{split}
\end{align*}
to the system \eqref{EQ01a}--\eqref{EQ10a}.

Now we fix the constants $M$ as in \eqref{EQ699} and $\TT= 1/M^6$.
One may easily check that the limiting cofactor matrix $a$ is the inverse of $\nabla \eta$ on the time interval $[0,\TT ]$, where $\eta$ is the limiting Lagrangian map.
Consequently, there exists a unique solution
\begin{align*}
\begin{split}
(v,q,w, w_t) 
&
\in 
K^{s+1} ((0,\TT)\times \Omegaf) \times  H^{s/2-1/2, s}  ((0,\TT)\times \Omegaf)
\\&\indeq\indeq
 \times C([0,\TT], H^{s/2+ 5/4 }(\Omegae))
\times
C([0,\TT], H^{s/2 +1/4 }(\Omegae))
\end{split}
\end{align*}
to the system~\eqref{EQ01}--\eqref{EQ10},
which then gives a solution to the system~\eqref{EQ01a}--\eqref{EQ10a}.
\end{proof}

\colb
\section*{Acknowledgments}
IK was supported in part by the
NSF grant DMS-2205493.
Part of the work was completed while the authors were members of the MSRI program ``Mathematical problems in fluid dynamics'' during the Spring~2021 semester (NSF DMS-1928930).

\small

\ifnum\sketches=1
\newpage
\begin{center}
  \bf   Notes \rm
\end{center}
\large
  \begin{equation}
   \Vert v  \Vert_{H^{\theta} H^{1}}    
   \lec
    \Vert  v  \Vert_{H^{\theta+1/2} L^{2}} 
    +   \Vert v \Vert_{ L^{2}H^{2\theta+1}} 
    ,
   \label{EQ121restate}
  \end{equation}
  \begin{align}
   \begin{split}
   \Vert \partial u/\partial N\Vert_{H^{s/2-1/4}L^2({\Gammac})}
   \lec
   \Vert \nabla u \Vert_{H^{s/2}L^{2}}
      + \Vert \nabla u \Vert_{L^{2} H^{s}}
   \end{split}
   \label{EQ153restate}
  \end{align}
  \begin{equation}
   \Vert \partial u/\partial N\Vert_{H^{s/2-1/4, s-1/2}_{\Gammac}}   
   \lec
   \Vert u\Vert_{K^{s+1}}
   .
   \label{EQ175restate}
  \end{equation}

\normalsize
\begin{itemize}
\item check all domains                            OK
\item check all equation references                OK
\item use checkref to remove unnecessary labels    OK
\item use the-the                                  OK
\item check parentheses                            OK
\item check introduction grammar carefully         OK
\item erase all comments                           OK
\item remove colors                                OK
\item remove unnecessary displayed formulas        OK
\item re-read the whole paper
check overfull boxes
\item check for commas and periods in all the displayed formulas
\item remove all $\backslash$newpage's             OK
\item comment out sketches                         OK
\item acknowledge the support of MSRI              OK
\item spell-check                                  OK
\item check for question marks in the pdf          OK
\end{itemize}

\fi
\end{document}